\newtheorem{theorem}{Theorem}[section]
\newtheorem{lemma}[theorem]{Lemma}
\newtheorem{corollary}[theorem]{Corollary}
\newtheorem{proposition}[theorem]{Proposition}
\newtheorem*{theorem*}{Theorem}
\theoremstyle{definition}
\numberwithin{equation}{subsection}
\newcommand{\ignore}[1]{}
\newcommand{\mynote}[1]{}
\begin{document}
\setcounter{section}{0}
\title{\bf Albert algebras and the Tits-Weiss conjecture}
\author{Maneesh Thakur }
\date{}
\maketitle
\begin{abstract}
\noindent
\it{We prove the Tits-Weiss conjecture for Albert division algebras over fields of arbitrary characteristic in the affirmative. The conjecture predicts that every norm similarity of an Albert division algebra is a product of a scalar homothety and $U$-operators. This conjecture is equivalent to the Kneser-Tits conjecture for simple, simply connected algebraic groups with Tits index $E^{78}_{8,2}$. We prove that a simple, simply connected algebraic group with Tits index $E_{8,2}^{78}$ or $E_{7,1}^{78}$, defined over a field of arbitrary characteristic, is $R$-trivial, in the sense of Manin, thereby proving the
  Kneser-Tits conjecture for such groups. The Tits-Weiss conjecture follows as a consequence.}
\end{abstract} 
\noindent
\small{{\it Keywords: Exceptional groups, Algebraic groups, Albert algebras, Structure group, Kneser-Tits conjecture, $R$-triviality.}}  

\section{\bf Introduction}
The primary goal of this paper is to prove the Tits-Weiss conjecture for Albert division algebras (i.e. exceptional simple Jordan algebras) over a field of arbitrary characteristic. 
A recent conjecture of J. Tits and R. M. Weiss predicts that the structure group (i.e. the group of norm similarities) of an Albert algebra $A$ over a field $k$ is generated by scalar homotheties and $U$-operators of $A$ (see \cite{TW}, 37.41, 37.42 and page 418). This conjecture is equivalent to the Kneser-Tits conjecture for groups with Tits index $E_{8,2}^{78}$ (see \cite{ACP} Appendix).

Let $\text{\bf G}$ be a connected, simple, simply connected algebraic group, defined and isotropic over a field $k$. The \emph{Kneser-Tits conjecture} predicts $\text{\bf G}(k)/\text{\bf G}(k)^{\dagger}=\{1\}$, where $\text{\bf G}(k)^{\dagger}$ is the subgroup of $\text{\bf G}(k)$, the group of $k$-rational points of $\text{\bf G}$, generated by the $k$-rational points of the unipotent radicals of $k$-parabolics of $\text{\bf G}$ (see \cite{G}, \cite{PRap}). The Kneser-Tits conjecture is false in general (see \cite{G}, \cite{PRap}), however one is interested in computing the quotient
$W(\text{\bf G},k):=\text{\bf G}(k)/\text{\bf G}(k)^{\dagger}$, called the \emph{Whitehead group} of $\text{\bf G}$. The Kneser-Tits conjecture was proved to hold over local fields by Platonov and the conjecture is now known to hold over global fields as well, by works of several authors (see \cite{G}, \cite{PRap}).

For an exhaustive survey on the Kneser-Tits problem, we refer to (\cite{G}, \cite{PRap}, \cite{T0}, \cite{T2}) and to (\cite{P-R1}) for reduction to the relative rank-$1$ case. The Kneser-Tits conjecture for groups with index $E_{8,2}^{66}$ was settled in (\cite{P-T-W}).

By a recent result of P. Gille (\cite{G}, Thm. 7.2), the validity of Kneser-Tits conjecture for an algebraic group $\text{\bf G}$ satisfying the Kneser-Tits hypothesis is equivalent to vanishing of the group of $R$-equivalent points $\text{\bf G}(k)/R$ for $\text{\bf G}$. This facilitates our strategy to prove the Tits-Weiss conjecture. In the paper we prove that algebraic groups with Tits indices $E_{8,2}^{78}$ and $E_{7,1}^{78}$, over fields of arbitrary characteristic, are $R$-trivial, in the sense of Manin (see \cite{M}).   

Let $k$ be a field of arbitrary characteristic and $\text{\bf G}$ be a simple, simply connected algebraic group, defined over a field $k$, having Tits index $E_{8,2}^{78}$ or $E_{7,1}^{78}$ (see \cite{T3} for the notation). We prove that $\text{\bf G}$ is $R$-trivial, i.e. $\text{\bf G}(L)/R=\{1\}$ for any field extension $L$ of $k$, in the sense of Manin (\cite{M}).

Groups with these indices exist over a field $k$ if there exist central associative division algebras of degree $3$ over $k$, whose reduced norm maps are not surjective or if there are central associative division algebras of degree $3$ with unitary involutions over a quadratic field extension $K$ of $k$, with non-surjective reduced norm map (see \cite{T1}, 3.2 and 3.3.1). For example, such groups exist over $\mathbb{Q}(t)$, the function field in one variable over $\mathbb{Q}$, while they do not exist over local fields, global fields and (algebraic extensions of) finite fields.

The anisotropic kernels of  groups $\text{\bf G}$ having any of the aforementioned indices are classified by isotopy classes of Albert division algebras over $k$. In fact, these are isomorphic to the structure groups of Albert division algebras over $k$ (see \cite{T1}). The Tits buildings associated to
groups of type $E_{8,2}^{78}$ are Moufang hexagons coming from hexagonal systems of type $27/K$ where $K$ is a quadratic field extension of $k$ or $K=k$ (\cite{TH}, \cite{TW}). Simple groups of type $E_{7,1}^{78}$ similarly correspond to Moufang sets $M(A)$ where $A$ is an Albert division algebra over $k$ (see \cite{TS}).

We prove the Tits-Weiss conjecture via $R$-triviality of algebraic groups with index $E_{8,2}^{78}$, in the affirmative. This is achieved by proving that the structure group of the Albert division algebra corresponding to the anisotropic kernel of such a group, is $R$-trivial. 

There are two rational constructions of Albert algebras over a field $k$ due to J. Tits, known as \emph{first Tits construction} and the \emph{second Tits construction}. In the two papers (\cite{Th-1}, \cite{Th-2}) the Tits-Weiss conjecture for first Tits construction Albert division algebras and also for reduced Albert algebras was settled in the affirmative. The present paper is a sequel to (\cite{Th-2}) and completes the investigation on the Tits-Weiss conjecture.

\vskip1mm
\noindent
{\bf Remark.} This paper appeared on the arXiv on $9$th November-2019. About three weeks later, on 28th November-2019, a paper by Alsaody et al (see \cite{ACP}) appeared on the arXiv, proving the Tits-Weiss conjecture. Their methods are largely cohomological, while our techniques are explicit and use computations on the subgroups of the structure group of an Albert division algebra. 

\section{\bf Preliminaries}
In this section, we give a brief introduction to various notions central to the paper, as well as set up notation. We refer the reader to (\cite{P2}, \cite{PR5}, \cite{SV}) and references therein, for basic material on Albert algebras.
\vskip1mm
\noindent
{\bf (a) Cubic norm structures, Tits process, Albert algebras.}
In this paper we encounter quadratic Jordan algebras of degree $3$ over a field. These can be neatly described and constructed via \emph{cubic norm structures}. Recall that a cubic norm structure on a vector space $V$ over a field $k$ is a triple $(N,\#, c)$, where the \emph{norm} $N:V\rightarrow k$ is a cubic form, the \emph{adjoint} $\#:V\rightarrow V$ is a quadratic map and the \emph{identity} $c\in V$ is a base point and $N(c)=1$. To avoid repetition, we refer the reader to (\cite{Th-2}, \S2) for more details, notation and exposition on cubic norm structures and their connection with quadratic Jordan algebras that we use in this paper.

Let $K$ be a quadratic \'{e}tale extension of $k$ and $(B,\sigma)$ be a separable associative algebra of generically algebraic degree $3$ over $k$, with an involution $\sigma$ of the second kind. Taking the unit element $1$ of $B$ as identity 
element, the restriction of the norm $N_B$ as norm and the restriction of the adjoint map of $B$ to $(B,\sigma)_+:=\{b\in B|\sigma(b)=b\}$ as adjoint, one obtains a cubic norm structure on $(B,\sigma)_+$. We therefore get a (quadratic) Jordan algebra structure on $(B,\sigma)_+$.

The case when $K=k\times k$, we necessarily have $(B,\sigma)=(E\times E^{\circ},\epsilon)$, where $E$ is a generically algebraic degree $3$ separable associative algebra over $k$, $E^{\circ}$ is its opposite algebra and $\epsilon$ is the switch involution on $E\times E^{\circ}$. The cubic norm structure in this case is identified with $(N_E, \#, 1_E)$ (see \cite{Th-2} for notation) and the associated quadratic Jordan algebra is denoted by $E_+$.
\vskip1mm
\noindent
{\bf Tits process.} 
Let $K$ be a quadratic \'{e}tale extension of $k$ and $B$ be a separable associative algebra of generically algebraic degree $3$ over $K$ with an involution $\sigma$ over $k$. Let $u\in (B,\sigma)_+$ be a unit with $N_B(u)=\mu\overline{\mu}$ for some $\mu\in K^{\times}$, where $a\mapsto \overline{a}$ is the nontrivial $k$-automorphism of $K$.  

Consider the cubic norm structure on the $k$-vector space $(B,\sigma)_+\oplus B$, given by the norm, adjoint and identity respectively as follows:
$$N((b,x)):=N_B(b)+T_K(\mu N_B(x))-T_B(bxu\sigma(x)),$$
$$(b,x)^{\#}:=(b^{\#}-xu\sigma(x), \overline{\mu}\sigma(x)^{\#}u^{-1}-bx),~1:=(1_B,0).$$ 

The associated (quadratic) Jordan algebra is denoted by $J(B,\sigma,u,\mu)$. Here $(B,\sigma)_+$ can be identified with a subalgebra of $J(B,\sigma, u,\mu)$ through the first summand. Moreover, $J(B,\sigma,u,\mu)$ is a division algebra if and only if $\mu$ is not a norm from $B$ (see \cite{PR7}, 5.2). This construction is called the \emph{Tits process} arising from the parameters $(B,\sigma),u$ and $\mu$. The pair $(u,\mu)$ as above is referred to as an \emph{admissible pair}. If $S:=(B,\sigma)_+$, then we sometimes write $J(S, u, \mu)$ for $J(B,\sigma, u, \mu)$ in \S3 of this paper.

If we start the Tits process with $(B,\sigma)$, a degree $3$ central simple algebra with a unitary involution $\sigma$ over its center $K$, we obtain a quadratic Jordan algebra $J(B,\sigma, u, \mu)$, called an \emph{Albert algebra}. All Albert algebras arise this way. Tits process in the context of Albert algebras are called Tits constructions. 
\vskip1mm
\noindent
{\bf (b) Isotopes.} If $v$ is an invertible element of a cubic norm structure $J(N,\#,c)$, the \emph{$v$-isotope} of $J(N, \#, c)$ is the cubic norm structure 
$$J(N, \#, c)^{(v)}:=J(N^{(v)}, \#^{(v)},c^{(v)}),~\text{with}~N^{(v)}(x)=N(v)N(x),$$
$$~x^{\#^{(v)}}=N(v)U_v^{-1}(x^{\#}),~c^{(v)}=v^{-1}=N(v)^{-1}v^{\#}.$$
We also need explicit description of isotopes of a Tits process. Let $L$ be a cubic \'{e}tale algebra and $K$ a quadratic \'{e}tale algebra over a field $k$. Let $*:LK\rightarrow LK$ be the nontrivial automorphism of $K$, extended $L$-linearly to an automorphism of $LK$. Let $(u,\mu)\in L^{\times}\times K^{\times}$ be an admissible pair. Consider the Tits process $J(LK, *, u, \mu)$. Then, for $v\in L^{\times}$, the $v$-isotope $J(LK, *, u, \mu)^{(v)}$ is isomorphic to $J(LK, *, uv^{\#}, N(v)\mu)$ via the map $ J(LK, *, u, \mu)^{(v)}\rightarrow J(LK, *, uv^{\#}, N(v)\mu)$ given by 
$$(l,x)\mapsto (lv,x)~\text{for}~l\in L^{(v)}=L,~x\in LK.$$
Similarly, for an Albert algebra $A=J(B,\sigma, u, \mu)$ as in ({\bf a}), for any invertible $v\in (B,\sigma)_+$, the $v$-isotope $A^{(v)}=J(B,\sigma, u, \mu)^{(v)}$ is isomorphic to the Albert algebra $J(B, \sigma_v, uv^{\#}, N(v)\mu)$ via the map $(b,x)\mapsto (vb,x)$ for $b\in (B,\sigma)_+^{(v)}, ~x\in B$ (see \cite{PR7}, Prop. 3.9), here $\sigma_v:B\rightarrow B$ is the involution
$\sigma_v(x)=v\sigma(x)v^{-1},~x\in B$.   

\vskip1mm
\noindent
{\bf (c) Isomorphisms, norm similarities, $U$-operators.} Let $J_1$ and $J_2$ be cubic norm structures over a field $k$, with norms $N_1$ and $N_2$ respectively, and $1$ denote the unit element in both structures. A linear bijection $f:J_1\rightarrow J_2$ is a \emph{norm similarity} if there exists $\nu(f)\in k^{\times}$ such that $N_2(f(x))=\nu(f)N_1(x)$ for all $x\in J_1$. 

A norm similarity is an isomorphism of the associated Jordan algebras if and only if it maps $1$ to $1$ (see \cite{McK}, \S4, page 507). In particular, a norm similarity of a cubic norm structure to itself is an automorphism if and only if it fixes the unit element $1$. 

For $k$ infinite, norm similarities are same as isotopies for degree $3$ Jordan algebras (see \cite{J1}, Chap. VI, Thm. 6, Thm. 7). For a Jordan algebra $J$ of degree $3$ (equivalently, a cubic norm structure) given as a Tits process (see {\bf (a)}), the operator $U_a$, for $a\in J$ invertible, is defined by $U_x(y):=T(x,y)x-x^{\#}\times y,~y\in J$, where $T(-,-)$ is the trace bilinear form on $J$, is a norm similarity. For any $x\in J$, we have $N(U_a(x))=N(a)^2N(x)$. We also have, for $x\in J$, $N(x^{\#})=N(x)^2$.

For a Jordan algebra $J$ of degree $3$ over a field $k$ and a norm similiarity $\psi:J\rightarrow J$, $a:=\psi(1)$ is invertible in $J$ and $a$ is the identity element of the isotope $J^{(v^{-1})}$. Moreover $\psi:J\rightarrow J^{(a^{-1})}$ is an isomorphism of Jordan algebras (see \cite{J1}, proof of (2), Thm. 7). 

\vskip1mm
\noindent
{\bf (d) Subalgebras of Albert division algebras.} If $A$ is an Albert division algebra over a field $k$, then any proper subalgebra of $A$ is either $k$ or a cubic subfield of $A$ or of the form $(B,\sigma)_+$ for a degree $3$ central simple algebra $B$ with an involution $\sigma$ of the second kind over its center $K$, a quadratic \'{e}tale extension of $k$ (see \cite{J1}, Chap. IX, \S 12, \cite{PR6}, Thm. 1.1). 
\vskip1mm
\noindent
{\bf (e) $R$-triviality, Whitehead groups, Kneser-Tits problem.}
Let $X$ be an irreducible variety over a field $k$ with $X(k)$ nonempty. Define $x,y\in X(k)$ to be $R$-equivalent 
if there exist $x_0=x, x_1,\cdots, x_n=y$ points in $X(k)$ together with rational maps $f_i:\mathbb{A}_k^1\rightarrow X,~1\leq i\leq n$ defined over $k$, $f_i$ regular at $0$ and $1$, with $f_i(0)=x_{i-1},~f_i(1)=x_i$ (see \cite{M}). 

Let $\text{\bf G}$ be a connected algebraic group defined over $k$. The $R$-equivalence is an equivalence relation on $\text{\bf G}(k)$. 
Let $R\text{\bf G}(k):=\{g\in\text{\bf G}(k)|g~\text{is~$R$-equivalent~to~$1\in\text{\bf G}(k)$}\}$. Then $R\text{\bf G}(k)$ is a normal subgroup of $\text{\bf G}(k)$ and the set $\text{\bf G}(k)/R$ of $R$-equivalence classes in $\text{\bf G}(k)$ can be identified with $\text{\bf G}(k)/R\text{\bf G}(k)$ and hence has a natural group structure. We identify $\text{\bf G}(k)/R$ with the group $\text{\bf G}(k)/R\text{\bf G}(k)$. 

We define $\text{\bf G}$ to be \emph{$R$-trivial} if $\text{\bf G}(L)/R=\{1\}$ for all field extensions $L$ of $k$. A variety $X$ defined over $k$ is $k$-\emph{rational} if $X$ is birationally isomorphic, over $k$, to an affine space. It is known (see \cite{Vos}, Chap. 6, Prop. 2) that if $\text{\bf G}$ is $k$-rational then $\text{\bf G}$ is $R$-trivial.  

Let $\text{\bf G}$ be a connected reductive group, defined and isotropic over $k$. Let $\text{\bf G}(k)^{\dagger}$ be subgroup of $\text{\bf G}(k)$ generated by the $k$-rational points of the unipotent radicals of the parabolic $k$-subgroups of $\text{\bf G}$. Then $\text{\bf G}(k)^{\dagger}$ is normal in $\text{\bf G}(k)$ and the group $W(k,\text{\bf G})=\text{\bf G}(k)/\text{\bf G}(k)^{\dagger}$ is called the \emph{Whitehead group} of $\text{\bf G}$. 

It is known that if $\text{\bf G}$ is a semisimple, simply connected and absolutely almost simple group, defined and isotropic over $k$, then $W(k,\text{\bf G})\cong \text{\bf G}(k)/R$ (\cite {G}, Thm. 7.2). The Kneser-Tits problem asks if for such a group $\text{\bf G}$, $W(k,\text{\bf G})=\{1\}$.

We end this section by stating the Main theorem of the paper and its consequences:
\vskip1mm
\noindent
    {\bf Theorem.} (see Theorem. \ref{main}) \emph{ Let $A$ be an Albert division algebra over a field $k$ of arbitrary characteristic. Then {\bf Str}$(A)$ is $R$-trivial.}
    \vskip1mm
    The Kneser-Tits conjecture for groups with Tits index $E_{7,1}^{78}$ or $E_{8,2}^{78}$, as well as the Tits-Weiss conjecture for Albert division algebras follow from this, see Corollary \ref{KnTits-1}, Corollary \ref{KnTits-2} and Corollary \ref{TW}.

\section{\bf $R$-triviality results} In this section, we prove some results on $R$-triviality of various groups, instrumental in   
proving the Main theorem. We begin by setting up some notation.

For a ring $R$, we denote the set of units in $R$ by $R^{\times}$. For a $k$-algebra $X$ and a field extension $L$ of $k$, We will denote by $X_L$ the $L$-algebra $X\otimes_kL$. More generally, for any object $X$ defined over $k$, $X_L$ will denote its base change to $L$.

For a ring $R$ and $a\in R$, we will denote the right homothety by $a$ on $R$ by $R_a$. Hence $R_a:R\rightarrow R$ and $R_a(x)=xa$ for all $x\in R$. 

Let $k$ be an infinite field of arbitrary characteristic and $\overline{k}$ be an algebraic closure of $k$. Let $A$ be an Albert algebra over $k$ with norm $N$ (see \S2 {\bf(a)}). It is well known that the full group of automorphisms $\text{\bf Aut}(A):=\text{Aut}(A_{\overline{k}})$ is a simple, simply connected algebraic group of type $F_4$ defined over $k$ and all simple groups of type $F_4$ defined over $k$ arise this way (see e.g. \cite{SV}, \cite{Spr}). 

We will denote the group of $k$-rational points of $\text{\bf Aut}(A)$ by $\text{Aut}(A)$. It is known that $A$ is a division algebra if and only if the norm form $N$ of $A$ is anisotropic (see \cite{Spr}, Thm. 17.6.5). Albert algebras whose norm form is isotropic over $k$ are called \emph{reduced}.

Let $A$ be an Albert algebra over $k$ and $S\subset A$ a subalgebra. By $\text{\bf Aut}(A/S)$ we denote the (closed) subgroup of $\text{\bf Aut}(A)$ consisting of automorphisms of $A$ which fix $S$ pointwise and $\text{\bf Aut}(A,S)$ denotes the closed subgroup of automorphisms of $A$ leaving $S$ invariant. We denote $\text{Aut}(A,S):= \text{\bf Aut}(A,S)(k)$ and $\text{Aut}(A/S):=\text{\bf Aut}(A/S)(k)$.

The \emph{structure group} of $A$ is the full group $\text{\bf Str}(A):=\text{Str}(A_{\overline{k}})$ of norm similarities of $N$, is a connected reductive group over $k$, of type $E_6$ (see \cite{SV}). We denote by $\text{Str}(A):=\text{\bf Str}(A)(k)$, the group of $k$-rational 
points of $\text{\bf Str}(A)$. The automorphism group $\text{\bf Aut}(A)$ is the stabilizer of $1\in A$ in $\text{\bf Str}(A)$ (see \cite{McK}, page 507). In particular, $\text{\bf Aut}(A)\subset\text{\bf Str}(A)$. The commutator subgroup $\text{\bf Isom}(A)$ of $\text{\bf Str}(A)$ is the full group of isometries of $N$ and is a simple, simply connected group of type $E_6$, anisotropic over $k$ if and only if $N$ is anisotropic, if and only if $A$ is a division algebra (see \cite{SV}, \cite{Spr}).

The $U$-operators $U_a$, for $a\in A$ invertible, are norm similarities (see \S2 {\bf(c)}) and generate a normal subgroup of $\text{Str}(A)$, called the \emph{inner structure group} of $A$, which we will denote by $\text{Instr}(A)$.

Similarly, $\text{\bf Str}(A,S)$ denotes the closed subgroup of $\text{\bf Str}(A)$ whose elements leave the subalgebra $S$ invariant. Let $\text{Str}(A,S) := \text{\bf Str}(A,S)(k)$. Denote by $\text{\bf Str}(A/S)$ the closed subgroup of $\text{\bf Str}(A)$ leaving the subalgebra $S$ pointwise fixed and $\text{Str}(A/S) :=\text{\bf Str}(A/S)(k)$. 
Since any norm similarity of $A$ fixing the identity element $1$ of $A$ is necessarily an automorphism of $A$, it follows that 
${\bf Str}$(A/S)={\bf Aut}$(A/S)$.
\vskip1mm
\noindent
{\bf Remark.} It is known that for an Albert division algebra $A$ and a $9$-dimensional subalgebra $S\subset A$, the subgroup $\text{\bf Aut}(A/S)$ of $\text{\bf Aut}(A)$ described above, is simply connected, simple of type $A_2$, defined over $k$ (see Thm. 8, Chap. IX, \cite{J1}, \cite{FP}, \cite{KMRT}, \S 39.B). Since these are rank-$2$ groups, they are are rational and hence are $R$-trivial. We will need this fact.
\vskip1mm
\noindent
{\bf Fixed points.} For a group $H$ acting on a set $X$, we denote the set of fixed points of $H$ in $X$ by $X^H$. For an element $\phi\in H$, $X^{\phi}$ denotes the set of points in $X$ fixed by $\phi$. When $X$ is an algebra and $H\subset\text{Aut}(X)$, the set $X^H$ is a subalgebra of $X$. We will use these notations and facts without any mention.  
\smallskip
\noindent
We recall a few results from (\cite{Th-2}) for ready referencing. 
\begin{lemma}[Thm. 4.1, \cite{Th-2}]\label{fixedpoint} Let $A$ be an Albert division algebra over $k$ and $\phi\in\text{Aut}(A)$ an automorphism of $A$. Then $\phi$ fixes a cubic subfield of $A$ pointwise.
\end{lemma}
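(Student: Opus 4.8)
The plan is to reduce the statement to a purely linear-algebraic assertion about the fixed space of $\phi$, and then verify that assertion using the structure theory of $F_4=\text{\bf Aut}(A)$ over $\overline{k}$. The first step exploits the division hypothesis: every $x\in A\setminus k\cdot 1$ generates a cubic subfield. Indeed, $x$ satisfies a generic minimal polynomial $m_x$, a monic cubic characterised by $N(\lambda\cdot 1-x)=m_x(\lambda)$ for $\lambda\in k$. If $m_x$ had a root $\lambda\in k$, then $\lambda\cdot 1-x$ would be a non-invertible element of the division algebra $A$, forcing $x=\lambda\cdot 1\in k$, a contradiction; hence $m_x$ is an irreducible cubic and $k[x]\cong k[t]/(m_x)$ is a cubic subfield. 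Consequently it suffices to produce a single non-scalar $x$ fixed by $\phi$: then $\phi$ fixes $1$ and $x$, hence fixes $k[x]=\mathrm{span}_k\{1,x,x^{\#}\}$ pointwise (using $\phi(x^{\#})=\phi(x)^{\#}$). In other words, I only need $A^{\phi}\neq k\cdot 1$, i.e. $\dim_k\ker(\phi-1)\geq 2$. Since $\ker(\phi-1)$ commutes with base change, this can be tested after extending scalars to $\overline{k}$, where $A_{\overline{k}}$ is the split Albert algebra and $\phi$ becomes an element of $F_4(\overline{k})$.

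Next I would carry out the computation over $\overline{k}$ through the Jordan decomposition $\phi=\phi_s\phi_u$. Because $\phi_u$ commutes with $\phi_s$, it restricts to a unipotent automorphism of the Jordan subalgebra $A^{\phi_s}$, and $\ker(\phi-1)=\ker\big((\phi_u-1)|_{A^{\phi_s}}\big)$; thus I must bound the unipotent-fixed space inside $A^{\phi_s}$. The semisimple element $\phi_s$ lies in a maximal torus $T$, and the $T$-fixed subalgebra $A^{T}$ — the zero-weight space in the $27$-dimensional representation, which is the split diagonal cubic étale subalgebra — is $3$-dimensional; hence $\dim A^{\phi_s}\geq 3$.

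The argument then splits according to whether $\phi_s$ is regular. If $\phi_s$ is regular semisimple, its centralizer is the maximal torus $T$, which contains no nontrivial unipotent element, so $\phi_u=1$ and $\phi=\phi_s$ already fixes the $3$-dimensional algebra $A^{T}$. If $\phi_s$ is non-regular (including $\phi_s=1$, when $\phi$ is unipotent), then $A^{\phi_s}$ is a strictly larger degree-$3$ Jordan subalgebra — one of the algebras $H_3(C)$ for a composition subalgebra $C$, of dimension $6$, $9$, $15$, or $27$ — and I would show that a unipotent automorphism of each such algebra fixes a non-scalar element (for a regular unipotent this follows from the principal-$SL_2$ decomposition of the relevant module). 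I expect the main obstacle to be exactly this last step in characteristics $2$ and $3$: there automorphisms of order $p$ can become unipotent (for instance, a $3$-cycle of a split cubic étale algebra in characteristic $3$ fixes only scalars), so one must rule out such degenerate configurations. The saving observation is that the only way to obtain a $3$-dimensional fixed algebra is for $\phi_s$ to be regular, which forces $\phi_u=1$; every non-regular $\phi_s$ leaves a subalgebra of dimension $\geq 6$ invariant, on which a unipotent automorphism necessarily fixes more than the scalars. Assembling the cases yields $A^{\phi}\neq k\cdot 1$ and hence the desired pointwise-fixed cubic subfield.
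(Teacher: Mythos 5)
The paper itself does not prove this lemma at all --- it is quoted from \cite{Th-2}, Thm.~4.1 --- so your attempt must be judged on its own merits, and there it has genuine gaps. Your opening reductions are sound: in a division algebra every non-scalar element generates a cubic subfield (since $m_x(\lambda)=N(\lambda\cdot 1-x)$ can have no root in $k$), so it suffices to prove $\dim_k\ker(\phi-1)\geq 2$, which may be checked over $\overline{k}$ via the Jordan decomposition. The problems start with the case analysis. Your classification of the fixed algebras $A^{\phi_s}$ is false: they need not be of the form $H_3(C)$. For example, conjugation by $\mathrm{diag}(-1,1,1)$ on the split algebra $H_3(C)$, $C$ the octonions, is a semisimple automorphism (characteristic $\neq 2$) whose fixed algebra is $\overline{k}\oplus H_2(C)$, of dimension $11\notin\{6,9,15,27\}$. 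In general $\dim A^{\phi_s}=3+\#\{\text{short roots }\mu:\mu(\phi_s)=1\}$, because the nonzero weights of the $27$-dimensional module are precisely the short roots of $F_4$; many dimensions occur.

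More seriously, the ``saving observation'' that carries your treatment of the delicate cases is also false: a $3$-dimensional fixed algebra does not force $\phi_s$ to be regular. Take $\phi_s\in T$ generic in the kernel of a single long root $\beta$: no short root annihilates $\phi_s$, so $A^{\phi_s}=A^{T}$ is $3$-dimensional, yet $\phi_s$ is non-regular and its centralizer contains the long-root $\mathrm{SL}_2$, whose nontrivial unipotents are then candidates for $\phi_u$. So the dichotomy you rely on collapses exactly where you need it. (The configuration is in fact harmless, but for a reason absent from your proposal: long roots are not weights of $A$, so the subgroup generated by $U_{\pm\beta}$ acts trivially on the zero-weight space $A^{T}$, and such a $\phi$ still fixes $A^{T}$ pointwise.) Finally, the last step --- that a unipotent automorphism of each larger fixed algebra fixes a non-scalar element --- is only gestured at via the principal $\mathrm{SL}_2$, which is precisely the tool that is unavailable or unreliable in characteristics $2$ and $3$, the cases this paper explicitly must cover. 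As written, the argument is incomplete in every characteristic and rests on two incorrect structural claims; it would need the weight-space analysis above, carried out correctly, to become a proof.
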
 
\begin{proposition}[Thm. 5.1, \cite{Th-2}]\label{Str-R-triv} Let $A$ be an Albert (division) algebra. Let $S\subset A$ be a $9$-dimensional subalgebra. Then, with the notations as above, $\text{\bf Str}(A,S)$ is $R$-trivial. 
\end{proposition}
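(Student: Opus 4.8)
The plan is to use restriction to $S$ to reduce the problem to the pointwise stabilizer, which is already known to be $R$-trivial. By Section 2(d), a $9$-dimensional subalgebra has the form $S=(B,\sigma)_+$ for a degree-$3$ central simple algebra $B$ over a quadratic \'etale extension $K/k$ equipped with an involution $\sigma$ of the second kind. Every element of $\text{\bf Str}(A,S)$ leaves $S$ invariant and the norm $N$ of $A$ restricts to the cubic norm $N_B$ on $S$, so restriction gives a morphism of algebraic groups $\rho\colon \text{\bf Str}(A,S)\to\text{\bf Str}(S)$. Its kernel is the group of norm similarities fixing $S$ pointwise; since a norm similarity fixing $1\in S$ is an automorphism (Section 2(c)), this kernel is exactly $\text{\bf Str}(A/S)=\text{\bf Aut}(A/S)$, which by the Remark above is simply connected of type $A_2$, hence rational and $R$-trivial. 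I therefore obtain an exact sequence $1\to\text{\bf Aut}(A/S)\to\text{\bf Str}(A,S)\xrightarrow{\rho}\text{\bf H}\to 1$ with $\text{\bf H}:=\rho(\text{\bf Str}(A,S))$ and $R$-trivial kernel.

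The heart of the argument is a pointwise reduction modulo this kernel. Fix a field extension $L/k$ and $\psi\in\text{\bf Str}(A,S)(L)$; it suffices to construct $\theta\in\text{\bf Str}(A,S)(L)$ that is $R$-equivalent to $1$ and satisfies $\theta|_S=\psi|_S$, because then $\theta^{-1}\psi$ fixes $S$ pointwise, lies in the $R$-trivial group $\text{\bf Aut}(A/S)(L)$, and the two facts combine to give $\psi\sim_R 1$. I will assemble $\theta$ from three families of norm similarities, each preserving $S$ and each joined to the identity inside $\text{\bf Str}(A,S)$ by a single rational map $\mathbb{A}^1\to\text{\bf Str}(A,S)$ regular at $0$ and $1$: first, the $U$-operators $U_s$ for invertible $s\in S$, which preserve $S$ because $S$ is closed under $\#$ and $\times$, and which are connected to $1$ along $t\mapsto U_{(1-t)1+ts}$ (the norm of the argument is a nonzero cubic in $t$, so this lands in $\text{\bf Str}(A,S)$ for generic $t$ and has the correct values $\mathrm{id}$ and $U_s$ at $t=0,1$); second, the scalar homotheties, connected to $1$ along $t\mapsto R_{(1-t)+ta}$; third, the twists $\beta_b\colon s\mapsto b\,s\,\sigma(b)$ for $b\in B^{\times}$, which map $S$ into itself, are norm similarities with factor $N_{K/k}(N_B(b))$, extend to $\text{\bf Str}(A,S)$ through the Tits-construction functoriality of Section 2(b), and are connected to $1$ along the line $t\mapsto\beta_{(1-t)1+tb}$ in the rational variety $B^{\times}$.

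It then remains to prove that $\psi|_S$ is a product of restrictions of such elements, equivalently that $\text{\bf H}(L)$ is generated by the images of the $U_s$, the homotheties and the $\beta_b$. This is a statement about the structure group of the $9$-dimensional Jordan algebra $(B,\sigma)_+$, whose connected semisimple part is of classical type ${}^2\!A_2$ when $K$ is a field and ${}^1\!A_2$ when $K=k\times k$. I would argue it in two stages: first, using that these generators already move $1$ to an arbitrary invertible element of $S$, choose a product $\theta_0$ of them with $\theta_0(1)=\psi(1)$, reducing to the case $\psi(1)=1$; second, note that $\psi$ then fixes $1$ and so restricts to a Jordan automorphism of $(B,\sigma)_+$, and that $\mathrm{Aut}((B,\sigma)_+)$ is generated by the inner twists $\beta_b$ with $b\sigma(b)=1$ together with the $K/k$-Galois action, all of which are among the listed generators. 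I expect this last generation statement --- pinning $\text{\bf H}$ down precisely and writing each of its elements as a word in $U$-operators, homotheties and twists, with due care for the semilinear Galois contribution when $K$ is a field --- to be the principal obstacle; once it is established, the exact sequence and the three explicit rational curves complete the proof that $\text{\bf Str}(A,S)$ is $R$-trivial.
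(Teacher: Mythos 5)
You should first be aware that the paper does not prove this proposition at all: it is imported verbatim from (\cite{Th-2}, Thm.~5.1), so what can be assessed here is only the internal soundness of your argument. Your framework --- the restriction sequence $1\to\text{\bf Aut}(A/S)\to\text{\bf Str}(A,S)\to\text{\bf Str}(S)$ with kernel of type $A_2$, rational and hence $R$-trivial --- is correct and consistent with the paper's setup. The fatal problem is the generation statement you defer to the end. Your three families, the $U_s$, the scalar homotheties, and the twists $\beta_b:x\mapsto bx\sigma(b)$, all lie in the identity component of $\text{\bf Str}(S)$; in fact, since $U_s=\beta_s$ for $s\in S^{\times}$ and $\beta_b\beta_c=\beta_{bc}$, the group they generate over $L$ is exactly $\{x\mapsto\lambda\, bx\sigma(b)\}=\text{\bf Str}(S)^{\circ}(L)$ (by Hilbert 90 for the quasi-trivial torus $R_{K/k}\mathbb{G}_m$ appearing as the kernel of $(\lambda,b)\mapsto\lambda\beta_b$). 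But $\text{\bf Str}(S)$ is a \emph{disconnected} group: over $\overline{k}$ it is $\{x\mapsto axb\}\sqcup\{x\mapsto ax^{T}b\}$ acting on $M_3$. $R$-triviality quantifies over \emph{all} extensions $L/k$, and over any $L$ splitting $B$ (e.g.\ $L=\overline{k}$, or the function field of the Severi--Brauer variety of $B$) the outer component acquires $L$-points --- for instance entry-wise conjugation $x\mapsto\bar{x}$ on $3\times 3$ hermitian matrices, a norm isometry which is not of the form $\lambda\beta_b$. Such elements do arise as restrictions of elements of $\text{Str}(A_L,S_L)$, so your scheme provides no way to connect the corresponding $\psi$ to $1$. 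Your remark about a ``semilinear Galois contribution'' gestures at this, but you supply neither a generator of outer type nor a rational curve through one, and this is precisely the hard case; it cannot be discarded, because it occurs over exactly those $L$ that the definition of $R$-triviality forces you to treat. Relatedly, your intermediate claim that the generators move $1$ to an arbitrary invertible element of $S$ is false: the orbit of $1$ under $\langle U\text{'s},\ \text{scalars},\ \beta\text{'s}\rangle$ is $\{\lambda\, b\sigma(b)\}$, the set of invertible symmetric elements whose rank-one hermitian form over $(B,\sigma)$ is similar to the unit form, and this is in general a proper subset of $S^{\times}$.

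A second, lesser, gap: for your explicit-curve argument you need the extension of $\beta_b$ from $S$ to $A$ to vary algebraically with the parameter, so that $t\mapsto\beta_{(1-t)1+tb}$ lifts to a rational curve in $\text{\bf Str}(A,S)$ regular at $0$ and $1$. The extension theorem (\cite{Th-2}, Thm.~4.4) produces an extension of each \emph{single} norm similarity of $S$, not an algebraic family, and ``Tits-construction functoriality'' is not a proof. This part is repairable: apply the extension theorem over the field $L(t)$ to the generic point of your curve in $\text{\bf Str}(S)$, and use the rationality and vanishing cohomology of the kernel $\text{\bf Aut}(A/S)$ to correct regularity and the values at $0$ and $1$, in the style of Gille's specialization of $R$-equivalence (\cite{G2}). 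But until the outer component of $\text{\bf Str}(S)$ is handled, the proposed proof does not go through.
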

\begin{corollary}[Cor. 5.1, \cite{Th-2}]\label{Aut-R-triv-S} Let $A$ be an Albert (division) algebra and $S$ a $9$-dimensional subalgebra of $A$. Then $\text{Aut}(A,S)\subset R(\text{\bf Str}(A)(k))$.
\end{corollary}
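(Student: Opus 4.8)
The plan is to deduce this directly from Proposition \ref{Str-R-triv} via the functoriality of $R$-equivalence under a closed immersion of groups. The first step is to record the inclusion $\text{\bf Aut}(A,S)\subseteq\text{\bf Str}(A,S)$ of algebraic groups. Every automorphism of $A$ fixes the identity $1$, hence is a norm similarity with multiplier $1$ (cf. \S2 {\bf (c)}); an automorphism leaving $S$ invariant is therefore in particular a norm similarity leaving $S$ invariant. Passing to $k$-points gives $\text{Aut}(A,S)\subseteq\text{Str}(A,S)$.

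The second step invokes Proposition \ref{Str-R-triv}: the group $\text{\bf Str}(A,S)$ is $R$-trivial, so taking $L=k$ we get $\text{\bf Str}(A,S)(k)/R=\{1\}$. Thus every element $\phi\in\text{Str}(A,S)$, and so in particular every $\phi\in\text{Aut}(A,S)$, is $R$-equivalent to $1$ \emph{within} $\text{\bf Str}(A,S)(k)$: there is a chain $1=x_0,x_1,\dots,x_n=\phi$ in $\text{\bf Str}(A,S)(k)$ together with rational maps $f_i:\mathbb{A}^1_k\to\text{\bf Str}(A,S)$, regular at $0$ and $1$, with $f_i(0)=x_{i-1}$ and $f_i(1)=x_i$, in the sense recalled in \S2 {\bf (e)}.

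The final step is the monotonicity of $R$-equivalence along the closed immersion $\iota:\text{\bf Str}(A,S)\hookrightarrow\text{\bf Str}(A)$. Composing the maps $f_i$ with $\iota$ yields rational maps $\iota\circ f_i:\mathbb{A}^1_k\to\text{\bf Str}(A)$ which are again regular at $0$ and $1$ and realise the same chain of points, now viewed inside $\text{\bf Str}(A)(k)$. Hence $\phi$ is $R$-equivalent to $1$ inside $\text{\bf Str}(A)(k)$, i.e. $\phi\in R(\text{\bf Str}(A)(k))$. As $\phi\in\text{Aut}(A,S)$ was arbitrary, this gives $\text{Aut}(A,S)\subseteq R(\text{\bf Str}(A)(k))$, which is the assertion.

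I do not expect a genuine obstacle here: the statement is a formal consequence of the already-established $R$-triviality of $\text{\bf Str}(A,S)$. The only point requiring (routine) care is the functoriality step, namely that an $R$-chain witnessed inside the closed subgroup remains a valid $R$-chain in the ambient group; this holds because a rational map into $\text{\bf Str}(A,S)$ composed with the inclusion $\iota$ is a rational map into $\text{\bf Str}(A)$ with the same domain of regularity and the same values at $0$ and $1$.
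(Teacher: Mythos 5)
Your proposal is correct and is essentially the paper's own argument: the corollary (quoted from Cor.\ 5.1 of \cite{Th-2}) is deduced there exactly by noting $\text{Aut}(A,S)\subseteq\text{Str}(A,S)$, invoking the $R$-triviality of $\text{\bf Str}(A,S)$ (Proposition \ref{Str-R-triv}), and observing that an $R$-chain in the subgroup remains an $R$-chain in $\text{\bf Str}(A)$ under the inclusion. Your explicit spelling-out of the functoriality step is the only (routine) point, and you handle it correctly.
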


\noindent
We now prove some auxiliary results, needed in the proof of the Main theorem. 
  \begin{proposition}\label{invariant-cubic} Let $A$ be an Albert division algebra over a field $k$ and $\psi\in\text{Str}(A)$. Then there exists a cubic subfield $M\subset A$ and $\theta\in R\text{\bf Str}(A)(k)$ such that
    $\theta\psi\in\text{Str}(A,M)$.
  \end{proposition}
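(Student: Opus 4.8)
$\textbf{Proof proposal.}$ The plan is to use the structure theory from \S2(c) to reduce an arbitrary norm similarity $\psi$ to one that stabilizes a cubic subfield, by first correcting it by an $R$-trivial element built from the subgroup $\text{\bf Str}(A,S)$ of Proposition~\ref{Str-R-triv}. First I would record the standard fact from \S2(c): writing $a:=\psi(1)$, the element $a$ is invertible in $A$ and $\psi:A\to A^{(a^{-1})}$ is an $\emph{isomorphism}$ of Jordan algebras. Thus $\psi$ maps subalgebras of $A$ to subalgebras of the isotope $A^{(a^{-1})}$, and composing with a suitable norm similarity that identifies the isotope back with $A$ should reduce everything to studying how an automorphism moves subfields.

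Next I would invoke Lemma~\ref{fixedpoint} together with the subalgebra structure in \S2(d). The idea is to produce a cubic subfield $M\subset A$ that $\psi$ respects up to an $R$-trivial correction. Concretely, I would look at the $9$-dimensional subalgebra $S\subset A$ guaranteed by the subalgebra theorem (of the form $(B,\sigma)_+$), or a cubic subfield contained in one, and use Proposition~\ref{Str-R-triv} to replace $\psi$ by $\theta\psi$ for some $\theta\in R\text{\bf Str}(A)(k)$ so that the corrected map normalizes a chosen subfield. The mechanism is that $\text{\bf Str}(A,S)$ is $R$-trivial, so any element of it already lies in $R\text{\bf Str}(A)(k)$; hence if I can arrange $\theta\psi$ to send some fixed cubic $M$ into itself, I am done. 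The role of Lemma~\ref{fixedpoint} is to guarantee, after passing to the automorphism associated to the isotope via $\psi(1)=a$, the existence of a cubic subfield that is pointwise fixed, which is stronger than merely invariant and gives the needed anchor.

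The key steps, in order, would be: (1) set $a=\psi(1)$ and view $\psi$ as an isomorphism $A\to A^{(a^{-1})}$; (2) choose a $9$-dimensional subalgebra or cubic subfield adapted to $a$, using \S2(d) and the isotope description in \S2(b); (3) produce an element $\theta\in\text{\bf Str}(A,S)(k)\subset R\text{\bf Str}(A)(k)$ (via Proposition~\ref{Str-R-triv}) whose composite with $\psi$ carries the chosen cubic subfield $M$ to itself; and (4) conclude $\theta\psi\in\text{Str}(A,M)$. Throughout, the crucial input is that membership in $\text{\bf Str}(A,S)$ already certifies $R$-triviality, so the correcting factor is automatically in $R\text{\bf Str}(A)(k)$.

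The main obstacle I anticipate is step (3): matching up the subfield on the source side of $\psi$ with a subfield of $A$ on the target side in a way that the intermediate identification is realized by an element of $\text{\bf Str}(A,S)$ rather than an arbitrary norm similarity. Since $\psi$ genuinely lands in an isotope $A^{(a^{-1})}$, I must control how cubic subfields of $A^{(a^{-1})}$ correspond to cubic subfields of $A$ and ensure the correcting similarity can be chosen to stabilize the $9$-dimensional subalgebra; this likely requires the explicit isotope formulas from \S2(b) and the transitivity of $\text{Aut}(A,S)$-type actions (Corollary~\ref{Aut-R-triv-S}) on appropriate sets of subfields. Verifying that this correspondence can always be effected by an $R$-trivial element, uniformly in arbitrary characteristic, is where the real work lies.
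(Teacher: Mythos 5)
Your proposal captures the right overall shape---correct $\psi$ by an element of some $\text{Str}(A,J)$, which lies in $R\text{\bf Str}(A)(k)$ by Proposition \ref{Str-R-triv}---but both of the steps that constitute the actual proof are left open, and the tools you name for them would not suffice. The first gap is the production of the cubic subfield itself. You propose to apply Lemma \ref{fixedpoint} to ``the automorphism associated to the isotope via $\psi(1)=a$,'' but no such automorphism exists in general: when $a\notin k$, $\psi$ is an isomorphism $A\rightarrow A^{(a^{-1})}$ onto an isotope, not an automorphism of $A$, and no homothety correction makes it one (that works only in the degenerate case $a\in k$, which is exactly how the paper disposes of that case). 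The paper's actual mechanism is different: setting $L=k(a)$, it uses that $\text{\bf Aut}(A/L)$ is a rank-$4$ group of type $D_4$, picks a generator $\phi$ of a maximal $k$-torus $T$ so that $A^{\phi}=A^{T}$, and shows $A^{\phi}=L$ exactly (a strictly larger fixed algebra would be $9$-dimensional, forcing the rank-$4$ torus $T$ into the rank-$2$ group $\text{\bf Aut}(A/S)$, a contradiction). Then the conjugate $\psi^{-1}\phi\psi$ fixes $1$, hence is an automorphism of $A$, and its fixed algebra is $M:=\psi^{-1}(L)$; this is what yields a cubic subfield $M$ with $\psi(M)=L\subset A$. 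Nothing in your outline produces this anchor.

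The second gap is the construction of $\theta$. With $M\neq L$ and $J:=\langle M,L\rangle$ nine-dimensional, the paper does not use ``transitivity of $\text{Aut}(A,S)$-type actions'' or Corollary \ref{Aut-R-triv-S} here; note that $\psi|_M:M\rightarrow L$ is only the restriction of a norm similarity, not a $k$-algebra isomorphism, so $M$ and $L$ need not even be isomorphic fields and no action by automorphisms (which fix $1$ and preserve algebra structure) can be expected to match them up. The tool is the weak Skolem--Noether theorem for isotopic embeddings (\cite{GP}, Thm. 5.2.7), applied to the two embeddings $\iota:M\hookrightarrow J$ and $\iota'=\psi|_M:M\rightarrow J$: it supplies $\theta\in\text{Str}(J)$ with $\theta\iota'=\iota R_w$ for some $w\in M$ with $N_M(w)=1$, whence $\theta\psi(M)=M$. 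One then needs the extension theorem (\cite{Th-2}, Thm. 4.4) to prolong $\theta$ to an element of $\text{Str}(A,J)$, and only at that point does Proposition \ref{Str-R-triv} certify $\theta\in R\text{\bf Str}(A)(k)$. These two inputs---the $D_4$ torus-generator argument and the Garibaldi--Petersson isotopic Skolem--Noether theorem plus extension---are precisely the content of the proof, and they are the parts your proposal flags as ``where the real work lies'' without supplying them.
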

  \begin{proof} Let $a:=\psi(1)$. If $a\in k$ then $R_{a^{-1}}\psi(1)=1$. Thus, by (\cite{McK}, \S4, page 507), $\phi:=R_{a^{-1}}\psi\in \text{Aut}(A)$ and by Lemma \ref{fixedpoint}, $\phi$ fixes a cubic subfield $M$ of $A$, hence $\psi$ stabilizes $M$ and the result follows in this case. So we may assume $a\notin k$. Then $L:=k(a)$, the subalgebra generated by $a$, is a cubic subfield of $A$. 
    \vskip1mm
    \noindent
        {\bf Claim.} There exists $\phi\in\text{ Aut}(A)$ such that $A^{\phi}=L$.
\vskip1mm

        To see this, we argue as follows. We consider the subgroup $\text{\bf Aut}(A/L)$ of $\text{\bf Aut}(A)$. One knows that $\text{\bf Aut}(A/L)$ is a simply connected, simple group of type $D_4$ defined over $k$ (see \S 17.9, \cite{Spr} or \cite{SV}). Let $T\subset\text{\bf Aut}(A/L)$ be any maximal $k$-torus. Then $T$ is a rank-$4$ torus. Let $\phi\in T(k)$ be a generator for $T$ (see \cite{B}, Prop. 8.8, Remark, or Prop. 1.6.4, \cite{T4}). Then by Zariski density, it follows that $A^T= A^{\phi}$. 

Since $T\subset\text{\bf Aut}(A/L)$, we have $L\subset A^T$. If $L\neq A^T$, then writing $S:=A^T$, by (\S2 {\bf(d)}), $S$ must be a $9$-dimensional subalgebra of $A$. Hence $T\subset\text{\bf Aut}(A/S)$. But by the Remark above, $\text{\bf Aut}(A/S)$ is a rank-$2$ group, and $T$ is rank-$4$, hence $T\not\subset\text{\bf Aut}(A/S)$, a contradiction. Therefore $L=A^T=A^{\phi}$. In particular $\phi(a)=a$. 

        We have
        $$\psi^{-1}\phi\psi(1)=\psi^{-1}\phi(a)=\psi^{-1}(a)=1.$$
        Hence $\psi^{-1}\phi\psi\in\text{Aut}(A)$. We again compute, for $x\in A$,
        $$ x\in A^{\psi^{-1}\phi\psi}\iff \psi^{-1}\phi\psi(x)=x\iff \phi\psi(x)=\psi(x)$$
        $$\iff \psi(x)\in A^{\phi}=L=k(a)\iff x\in \psi^{-1}(L).$$
        Therefore $M:=A^{\psi^{-1}\phi\psi}=\psi^{-1}(L)$ is a subfield of $A$ and $\psi:M=\psi^{-1}(L)\rightarrow L$. Let $J:=<M, L>$, the subalgebra generated by $M$ and $L$ in $A$.
        If $\psi^{-1}(L)=L=M$, then the assertion of the theorem follows by taking $M=L$ and $\theta=1$.

        Hence we may assume $M\neq L$ and thus $J$ is a $9$-dimensional subalgebra of $A$ (see \S2 {\bf(d)}).
        Following (\cite{GP}), we consider the isotopic embeddings $\iota':M\rightarrow J$ given by $\iota'(m)=\psi(m)\in L\subset J$ and $\iota:M\hookrightarrow J$ is the inclusion. Then, by (\cite{GP}, Thm. 5.2.7), there exists $\theta\in \text{Str}(J)$
        such that $\theta\iota'=\iota R_w$ for some element $w\in M$ with $N_M(w)=1$. Then we have, for any $x\in M$,
        $ \theta\iota'(x)=\iota R_w(x)$. Hence $\theta(\psi(x))=wx\in M~\forall x\in M$. Therefore $\theta\psi(M)=M$.

        By (Thm. 4.4, \cite{Th-2}), we can extend $\theta$ to a norm similarity of $A$,
        which we continue to denote by $\theta$. Then $\theta\in \text{Str}(A,J)$ and hence $\theta\in R\text{\bf Str}(A)(k)$ by 
Proposition \ref{Str-R-triv}. So the result follows. 
  \end{proof}
  \noindent In the following, let $M\subset A$ be a fixed cubic subfield and $\mathcal{S}_M$ denote the set of all $9$-dimensional subalgebras of $A$ which contain $M$. We have,
  \begin{proposition}\label{action} With the notation as above, the group $\text{Str}(A,M)$ acts on $\mathcal{S}_M$.
  \end{proposition}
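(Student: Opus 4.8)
The plan is to let $\text{Str}(A,M)$ act by $\psi\cdot S:=\psi(S)$ for $\psi\in\text{Str}(A,M)$ and $S\in\mathcal{S}_M$, and to verify this is well defined, i.e. that $\psi(S)$ is again a $9$-dimensional subalgebra of $A$ containing $M$. Two of these three requirements are immediate: $\psi$ is a linear bijection, so $\dim_k\psi(S)=\dim_k S=9$; and since $\psi(M)=M$ and $M\subseteq S$, we get $M=\psi(M)\subseteq\psi(S)$. The only real content is that $\psi(S)$ is a \emph{subalgebra} of $A$; this is not automatic, since a norm similarity need not be a Jordan homomorphism.

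To handle this, set $a:=\psi(1)$. As $\psi(M)=M$ and $1\in M$, we have $a\in M$, and $a$ is invertible (being the image of the invertible element $1$ under a norm similarity), with $a^{-1}\in M$. By \S2 {\bf(c)}, $\psi\colon A\rightarrow A^{(a^{-1})}$ is an isomorphism of Jordan algebras onto the isotope, whose identity element is $a=\psi(1)$; hence $\psi(S)$ is a unital subalgebra of the isotope $A^{(a^{-1})}$. It remains to transfer this back to $A$, for which I would use the following locality property of isotopes: if $J$ is a cubic norm structure, $v\in J$ is invertible, and $T$ is a unital subalgebra of $J$ with $v\in T$, then $T$ is also a unital subalgebra of $J^{(v)}$. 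Indeed $T$ is closed under $\#$ and contains the base point, so it contains $v^{-1}=N(v)^{-1}v^{\#}$ and is stable under $U_v^{-1}=U_{v^{-1}}$; the explicit formulas of \S2 {\bf(b)}, namely $x^{\#^{(v)}}=N(v)U_v^{-1}(x^{\#})$ and $c^{(v)}=v^{-1}$, then show that $T$ is closed under $\#^{(v)}$ and contains the base point $c^{(v)}$ of $J^{(v)}$.

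To apply this in the direction I need, I would realize $A$ itself as an isotope of $A^{(a^{-1})}$. A short computation with the composition of isotopes, using $U_{a^{-1}}=U_a^{-1}$ and $U_a(1)=a^2$, gives $(A^{(a^{-1})})^{(a^2)}=A$ (one checks that the base point and the norm agree). Now $a\in M$ forces $a^2\in M\subseteq\psi(S)$, so $\psi(S)$ is a unital subalgebra of $A^{(a^{-1})}$ containing the invertible element $a^2$. Applying the locality property with $J=A^{(a^{-1})}$ and $v=a^2$ yields that $\psi(S)$ is a unital subalgebra of $(A^{(a^{-1})})^{(a^2)}=A$. Combined with the dimension count and the containment of $M$ already noted, this shows $\psi(S)\in\mathcal{S}_M$. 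The group action axioms $(\psi_1\psi_2)(S)=\psi_1(\psi_2(S))$ and $\mathrm{id}(S)=S$ are then clear from the definition.

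The main obstacle is precisely the middle step: a norm similarity $\psi$ respects the Jordan structure only after passage to the isotope $A^{(a^{-1})}$, so $\psi(S)$ is \emph{a priori} only a subalgebra of that isotope and not of $A$. The crux is the locality of the isotope construction — that the isotope operations on a subalgebra containing the isotope parameter never leave the subalgebra — together with the identification of $A$ as the $a^2$-isotope of $A^{(a^{-1})}$, which lets one descend $\psi(S)$ back to a genuine subalgebra of $A$.
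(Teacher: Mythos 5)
Your proposal is correct, but it takes a genuinely different route from the paper. The paper handles the crux (that $\psi(S)$ is a subalgebra of $A$, not just of an isotope) by an automorphism-conjugation trick: it fixes a nontrivial $\phi\in\text{Aut}(A/S)$, notes that $A^{\phi}=S$ by the classification of subalgebras (\S2 {\bf(d)}), observes that $\psi\phi\psi^{-1}$ fixes $1$ (because $\psi^{-1}(1)\in M\subset S$ is fixed by $\phi$) and hence is a genuine automorphism of $A$, and then identifies $\psi(S)=A^{\psi\phi\psi^{-1}}$ as the fixed-point set of an automorphism, which is automatically a subalgebra of $A$. You instead descend $\psi(S)$ from the isotope $A^{(a^{-1})}$ back to $A$ by pure isotope bookkeeping: the locality property (a unital subalgebra containing the isotope parameter stays a subalgebra of the isotope, via $x^{\#^{(v)}}=N(v)U_v^{-1}(x^{\#})$ and $c^{(v)}=v^{-1}$) combined with the composition identity $(A^{(a^{-1})})^{(a^2)}=A$, which indeed holds since $(J^{(u)})^{(v)}=J^{(U_u(v))}$ and $U_{a^{-1}}(a^2)=1$. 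Each approach has its merits: the paper's argument is shorter given its standing background, but it silently relies on the existence of a nontrivial $k$-point of $\text{\bf Aut}(A/S)$ and on the subalgebra classification; your argument needs neither of these — it is entirely formula-driven and works verbatim over any base field — at the cost of verifying the isotope composition and locality identities, which are standard but not stated in the paper.
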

  \begin{proof} Let $\psi\in\text{Str}(A,M)$ and $S\in\mathcal{S}_M$ be arbitrary. Fix $\phi\in\text{Aut}(A/S),~\phi\neq 1$. Then $S\subset A^{\phi}$. Since $\phi\neq 1$ and $\text{dim}(S)=9$, by (\S2 {\bf(d)}), it follows that $A^{\phi}=S$. Since $\psi^{-1}(1)\in M\subset S$ and $\phi$ fixes $S$ pointwise, we have
    $$\psi\phi\psi^{-1}(1)=\psi(\psi^{-1}(1))=1,$$
    Hence, by (\S2 {\bf(c)}), $\psi\phi\psi^{-1}\in\text{Aut}(A)$. Moreover, for any $x\in S$, we have
    $$\psi\phi\psi^{-1}(\psi(x))=\psi\phi(x)=\psi(x).$$
   Hence $A^{\psi\phi\psi^{-1}}=\psi(S)$ and since $\psi\phi\psi^{-1}$ is an automorphism of $A$,  $\psi(S)$ is a $9$-dimensional subalgebra of $A$ and $\psi(S)\supset\psi(M)=M$. Hence $\psi(S)\in\mathcal{S}_M$ and we have proved the theorem. 
 \end{proof}
 \noindent
     {\bf Notation:} Let $A$ be an Albert division algebra and $M\subset A$ a cubic subfield. Then for $\psi\in\text{Str}(A,M)$ and $S\in\mathcal{S}_M$, by the above, $\psi(S)\in\mathcal{S}_M$, i.e. $\psi(S)$ is a subalgebra of $A$ and conatins $M$ as a subfield. In particular, the identity elements of $\psi(S)$ and $A$ coincide. On the other hand, if $a:=\psi(1)\in M^{\times}$, then $\psi: A\rightarrow A^{(a^{-1})}$ is an isomorphism and $\psi(S)$ is therefore a subalgebra of $A^{(a^{-1})}$ containing $\psi(M)=M$ and both $A^{(a^{-1})}$ and $\psi(S)$ have $a=\psi(1)$ as identity element. To distinguish $\psi(S)\subset A$ from $\psi(S)\subset A^{(a^{-1})}$, we will write $[\psi(S)]$ for $\psi(S)$ when treated as a subalgebra of $A$.

 \vskip1mm
 \noindent
{\bf Cyclic Albert algebras.} We now prove some results for cyclic Albert division algebras, crucial to the proof of the Main theorem.
 An Albert algebra $A$ over a field $k$ is said to be \emph{cyclic} if $A$ contains a cubic cyclic field extension of $k$ as a subalgebra. Albert division algebras that arise from the first Tits construction are cyclic (see \cite{PR2}). 

\noindent
Let $L/k$ be a cyclic cubic field extension and $K/k$ a quadratic separable field extension. Let $\rho$ be a generator for $Gal(L/k)$ also let $\rho$ denote the $K$-linear field automorphism of $LK$ extending $\rho\in Gal(L/k)$.
Let $x\mapsto \overline{x}$ denote the nontrivial field automorphism of $K/k$. We now prove,
\begin{lemma}\label{tits-pro-skol} Let $J=J(LK, *, 1, \nu)$ be a Tits process, where $*$ denotes the nontrivial automorphism of $LK/L$, that fixes $L$ pointwise. Let $\rho$ be a generator of $Gal(L/k)$. Then $\rho$ extends to an automorphism of $J$.
\end{lemma}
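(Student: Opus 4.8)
The plan is to write down the obvious candidate for the extension and verify directly that it preserves the cubic norm structure defining $J$. As a $k$-vector space $J=(LK,*)_+\oplus LK$, and since $*=\mathrm{id}_L\otimes\,\overline{(\cdot)}$ on $LK=L\otimes_k K$, the fixed space $(LK,*)_+$ is exactly $L$ (embedded as $L\otimes 1$), which is a subalgebra of $J$ through the first summand. The map denoted $\rho$ in the statement is $\rho\otimes\mathrm{id}_K$, a $K$-algebra automorphism of $LK$ whose restriction to $L$ is the given generator of $\mathrm{Gal}(L/k)$. I would define $\widetilde\rho:J\to J$ by $\widetilde\rho(b,x)=(\rho(b),\rho(x))$ for $b\in L$, $x\in LK$, and prove $\widetilde\rho\in\mathrm{Aut}(J)$; since $\widetilde\rho(b,0)=(\rho(b),0)$, it then restricts to $\rho$ on the subalgebra $L\subset J$, which is the desired extension.

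The verification rests on a few structural properties of $\rho=\rho\otimes\mathrm{id}_K$ that I would record first: (i) $\rho$ is a $K$-algebra automorphism of $LK$, hence commutes with the adjoint and preserves both the norm $N_{LK}$ and the trace $T_{LK}$ of the cubic \'etale $K$-algebra $LK$; (ii) $\rho$ commutes with $*$, because on $L\otimes_k K$ one has $\rho=\rho\otimes\mathrm{id}$ and $*=\mathrm{id}\otimes\overline{(\cdot)}$, acting on disjoint tensor factors; and (iii) $\rho$ fixes $K$ pointwise, so it fixes the scalar $\nu$ and its conjugate $\overline\nu$, and it fixes the parameter $u=1$. Property (iii) is precisely where the hypothesis $u=1$ (rather than a general admissible unit) enters: a general $u\in L^{\times}$ need not be $\rho$-invariant, and the argument would break.

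With these in hand the three checks are routine. First, $\widetilde\rho(1,0)=(\rho(1),0)=(1,0)$, so the identity is preserved. Next, applying $\rho$ term by term to $N((b,x))=N_{LK}(b)+T_K(\nu N_{LK}(x))-T_{LK}(b\,x\,*(x))$ and using (i)--(iii): $N_{LK}(\rho b)=N_{LK}(b)$; $\rho$ fixes $\nu$ and commutes with $N_{LK}$ inside the $K$-trace term; and $\rho(b\,x\,*(x))=\rho(b)\,\rho(x)\,*(\rho(x))$ by multiplicativity together with $\rho*=*\rho$, so the $T_{LK}$-term is unchanged; hence $N\circ\widetilde\rho=N$. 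The same bookkeeping applied to the adjoint $(b,x)^{\#}=(b^{\#}-x\,*(x),\ \overline\nu\,*(x)^{\#}-bx)$ gives $\widetilde\rho((b,x)^{\#})=(\widetilde\rho(b,x))^{\#}$, using $\rho(b^{\#})=\rho(b)^{\#}$ on $L$, the commutation of $\rho$ with $\#$ and with $*$ on $LK$, and $K$-linearity to pull the fixed scalar $\overline\nu$ through. Thus $\widetilde\rho$ preserves norm, adjoint and identity, so it is an automorphism of the cubic norm structure, hence of $J$.

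The computation is mechanical; the one point that requires care, and which I would flag as the crux, is keeping straight that the relevant norm, trace and adjoint are those of $LK$ as a cubic algebra \emph{over $K$} (not over $k$), and that $\rho$, being $K$-linear and commuting with $*$, preserves each of them while fixing the scalars $\nu,\overline\nu$ and the parameter $u=1$. Once these invariances are isolated, no genuine obstacle remains.
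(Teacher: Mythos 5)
Your proposal is correct and takes essentially the same route as the paper: both define the candidate map $\widetilde{\rho}((l,x))=(\rho(l),\rho(x))$ with $\rho$ extended $K$-linearly to $LK$, and verify invariance of the Tits process structure using exactly the facts you isolate, namely that $\rho$ commutes with $*$ and fixes $K$ (hence $\nu$) pointwise. The only difference is that you additionally verify compatibility with the adjoint, whereas the paper shortcuts this via the criterion of \S 2 {\bf (c)} that a $k$-linear bijection preserving the norm and fixing the identity element is automatically an automorphism, so only the norm computation is required.
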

\begin{proof} We have $J=L\oplus LK$ as $k$-vector spaces. Define $\tilde{\rho}:J\rightarrow J$ by
  $$\widetilde{\rho}((l,x))=(\rho(l), \rho(x)),$$
  where we also denote by $\rho$ the $K$-linear extension of $\rho$ to an automorphism of $LK$. To prove $\widetilde{\rho}$ is an automorphism of $J$, by (\S2 {\bf(c)}), it suffices to check it is bijective, $k$-linear and it preserves the identity and norm of $J$. It is obvious that $\widetilde{\rho}$ is $k$-linear, bijective and $\widetilde{\rho}((1,0))=(1,0)$. We check below that the norms are preserved. We have, using the fact that $*$ and $\rho$ centralize each other in $Gal(LK/k)$, 
  $$N_J(\widetilde{\rho}((l,x)))=N_J((\rho(l), \rho(x)))~~~~~~~~~~~~~~~~~~~~~~~~~~~~~~~~~~~~~~~~~~~~~~~~~~~~~~~$$
$$~~~~~~~~~~=N_L(\rho(l))+T_{K/k}(\nu N_{LK}(\rho(x)))-T_{LK}(\rho(l)\rho(x)\rho(x)^*)$$
$$=N_L(l)+T_{K/k}(\nu N_{LK}(x))-T_{LK}(\rho(lxx^*))~~~~~$$
$$~~~~~~=N_L(l)+T_K(\nu N_{LK}(x))-T_{LK}(lxx^*)=N_J((l,x)).$$
\end{proof}
\begin{lemma}\label{special-subalg} Let $A$ be an Albert division algebra over a field $k$ and $L\subset A$ be a separable cubic subfield. Then there exists $v\in L^{\times}$ such that the isotope $A^{(v)}$ contains a subalgebra isomorphic to a Tits process algebra $J(LK,*, 1,\nu)$ for a suitable quadratic \'{e}tale extension $K/k$ and a suitable norm $1$ element $\nu\in K^{\times}$. 
\end{lemma}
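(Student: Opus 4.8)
The plan is to embed $L$ into a nine-dimensional subalgebra of $A$, present that subalgebra as a Tits process built on $LK$ for a well-chosen $K$, and then pass to an isotope by an element of $L^{\times}$ to normalise the twisting parameter to $1$. First I would produce a nine-dimensional subalgebra $S\subset A$ with $L\subset S$. Since $A$ is a division algebra, $\text{\bf Aut}(A/L)$ is an anisotropic simply connected group of type $D_4$ over $k$; arguing as in the proof of Proposition \ref{invariant-cubic}, a suitable $k$-subgroup of type $A_2$ (or directly the embedding result of \cite{PR6}, \cite{J1}) yields an automorphism $\phi\in\text{Aut}(A/L)$, $\phi\neq 1$, with $A^{\phi}$ a proper subalgebra strictly containing $L$; by (\S2 {\bf(d)}) such a subalgebra is nine-dimensional, and I set $S:=A^{\phi}$. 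Again by (\S2 {\bf(d)}), $S=(B,\sigma)_+$ for a degree $3$ central simple algebra $B$ over its centre $K=Z(B)$, a quadratic field extension of $k$, with $\sigma$ an involution of the second kind. As $\sigma$ fixes $L$ pointwise and stabilises $K$, the \'etale subalgebra $LK=L\otimes_k K$ is a $\sigma$-stable maximal subfield of $B$ with $(LK,\sigma)_+=L$.

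Next I would present $S$ as a Tits process over $LK$, after arranging cyclicity. If $L/k$ is cyclic this is automatic; otherwise I would take $K$ to be the quadratic resolvent of $L/k$, so that $LK/K$ is a cyclic cubic extension with $\mathrm{Gal}(LK/K)=\langle\rho\rangle$. The structure theory of degree $3$ algebras with involution of the second kind containing a $\sigma$-stable cyclic maximal subfield (see \cite{PR7}, \cite{KMRT}, \S39) then realises $B$ as the associative second Tits construction attached to $(LK,\rho)$ and identifies $S\cong J(LK,*,u,\mu)$ for an admissible pair $(u,\mu)$ with $u\in L^{\times}$ and $N_{L/k}(u)=N_{K/k}(\mu)$.

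It then remains to normalise $u$ to $1$. For $v\in L^{\times}$ the isotope formula of (\S2 {\bf(b)}) gives $S^{(v)}\cong J(LK,*,uv^{\#},N_{L/k}(v)\mu)$, and the isomorphism theory of Tits processes (\cite{PR7}, \cite{KMRT}, \S39) provides the further equivalences $u\mapsto u\,N_{LK/L}(w)$ (with $\mu\mapsto N_{LK/K}(w)\mu$) for $w\in(LK)^{\times}$ and $u\mapsto\rho(u)$ coming from the Galois automorphism $\rho$ of $LK/K$. Combining the isotope with these equivalences, I would choose $v\in L^{\times}$ (and auxiliary $w$, $\rho$) so that the resulting parameter becomes $1$; writing $\nu:=N_{L/k}(v)\,N_{LK/K}(w)\,\mu$ for the corresponding second parameter, a direct computation taking $N_{L/k}$ of the normalising relation and invoking the original admissibility $N_{L/k}(u)=N_{K/k}(\mu)$ shows that $N_{K/k}(\nu)=1$ automatically, i.e. $\nu$ is a norm-$1$ element. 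Since $A^{(v)}\supset S^{(v)}$ and every isotope of an Albert division algebra is again a division algebra, $A^{(v)}$ contains a subalgebra isomorphic to the Tits process $J(LK,*,1,\nu)$, as required.

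The decisive step is the normalisation of $u$: reducing the twisting parameter to $1$ is a norm condition that the adjoint map $v\mapsto v^{\#}$ on $L^{\times}$ and the norm $N_{LK/L}$ cannot satisfy on their own, and it is precisely here that the cyclicity of $LK/K$—forced by the choice of $K$ as the quadratic resolvent of $L$—together with Hilbert 90 for this extension is needed to supply the remaining equivalences and to make $N_{K/k}(\nu)=1$ fall out. A secondary point that must be handled with care is the $k$-rational existence of the nine-dimensional subalgebra $S\supset L$, a descent question inside the anisotropic $D_4$-group $\text{\bf Aut}(A/L)$, settled either by the torus argument of Proposition \ref{invariant-cubic} or by the embedding result of \cite{PR6}.
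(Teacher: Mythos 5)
Your overall architecture (embed $L$ in a nine\-dimensional $S=(B,\sigma)_+$, present $S$ as an \'etale Tits process over $LK$, then isotope by some $v\in L^{\times}$ to make the first parameter $1$) matches the paper's, but your way of producing the presentation breaks down exactly in the case the lemma must cover: non-cyclic separable $L$. Having fixed $K=Z(B)$ to be a quadratic \emph{field}, you cannot later ``take $K$ to be the quadratic resolvent $\Delta$ of $L/k$'': the centralizer of $L$ in $B$ is the maximal \'etale subalgebra $L\cdot Z(B)$, whose only quadratic subfield is $Z(B)$, so $L\Delta$ does not even embed in $B$ unless $\Delta\cong Z(B)$, and there is no crossed-product presentation of $B$ along $L\Delta$. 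Worse, the identification $S\cong J(L\Delta,*,u,\mu)$ is generically false: for the \'etale Tits process one has $J(LK,*,u,\mu)\cong (B',\sigma')_+$ with $[Z(B')]=[\Delta_L]+[K]$ in $H^1(k,\mathbb{Z}/2\mathbb{Z})$ (this is exactly what makes $D_+\supset L$, with $D$ central simple over $k$ and $L$ non-cyclic, an \'etale Tits process over $L\Delta_L$), so taking $K=\Delta_L$ forces $Z(B')$ to be split, contradicting your standing assumption that $Z(B)$ is a field. This is precisely why the paper does not argue via crossed products but invokes the presentation theorem (\cite{PT}, 1.6; \cite{P}, 6.4), which for \emph{any} separable cubic $L\subset(B,\sigma)_+$ gives an isomorphism $J(LK,*,v,\nu)\cong(B,\sigma)_+$ extending the inclusion of $L$, with the correct quadratic \'etale $K$ (of class $[Z(B)]+[\Delta_L]$, which is immaterial since the lemma allows any $K$) and, crucially, with $N_L(v)=N_K(\nu)=1$. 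You also silently discard the case $Z(B)=k\times k$; the paper treats it separately, where $A$ is a first Tits construction and $L$ is automatically cyclic by \cite{PR2}.

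The second problem is that your decisive step---solving $u\,v^{\#}\,N_{LK/L}(w)=1$---is asserted, not proved; ``Hilbert 90 for $LK/K$'' is not an argument, and nothing in your sketch shows that $u^{-1}$ lies in the subgroup $\{v^{\#}N_{LK/L}(w)\}$ of $L^{\times}$. In the paper this issue never arises: since the cited theorem already yields $N_L(v)=1$, one has $vv^{\#}=N_L(v)1=1$, so the single $v$-isotope $J(LK,*,v,\nu)^{(v)}\cong J(LK,*,vv^{\#},N_L(v)\nu)=J(LK,*,1,\nu)$ finishes, with $N_K(\nu)=1$ inherited from the theorem rather than deduced. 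For what it is worth, your step can be repaired inside your own framework in the cyclic case: taking $w=u\mu^{-1}$ in your $w$-equivalence replaces $(u,\mu)$ by $(u^{3}N_L(u)^{-1},\overline{\mu}\mu^{-1})$, an admissible pair with both norms equal to $1$, and then the isotope by $u^{3}N_L(u)^{-1}$ produces the parameter $1$ exactly as in the paper. But no such repair is available in the non-cyclic case, because the presentation you start from there does not exist; that case needs the \'etale Tits process theorem (or an equivalent descent argument), and without it the lemma is not proved in the generality in which it is stated and used.
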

\begin{proof} Let $(B,\sigma)_+$ be a $9$-dimensional subalgebra of $A$ such that $L\subset (B,\sigma)_+\subset A$. The inclusion $(B,\sigma)_+\hookrightarrow A$ extends to an isomorphism $J(B,\sigma, u, \mu)\cong A$ for suitable admissible pair $(u,\mu)$ (see \cite{McK-2}, Thm. 9). Hence, we may assume $A=J(B,\sigma, u, \mu)$. 

  If the center $K:=Z(B)$ of $B$ is split, then $(B, \sigma)=(D\times D^{\circ}, \epsilon)$ for a degree $3$ central division algebra $D$ over $k$, $\epsilon$ is the switch involution on $B$. In this case $A$ is a first Tits construction and $D_+\supset L$. Hence $S=D_+=J(L,\nu)$ for a suitable $\nu\in k^{\times}$ is a subalgebra of $A$ and can be identified with the Tits process
  $J(LK,*,(1,1),(\nu,\nu^{-1}))$, where $*:LK\rightarrow LK$ is the switch involution.

  We may therefore assume that $K$ is a separable quadratic field extension of $k$. Since $L\subset (B,\sigma)_+$, by (\cite{PT}, 1.6, \cite{P}, 6.4), there exists an isomorphism 
$J(LK, *,v, \nu)\cong (B,\sigma)_+$ for some $v\in L^{\times}$ with $N_L(v)=1$, $\nu\in K$ with $N_K(\nu)=1$, which extends the inclusion $L\hookrightarrow (B,\sigma)_+$. This induces an isomorphism 
$$A=J(B,\sigma, u, \mu)\cong J(S_1, u_1, \mu),$$
where $S_1:=J(LK,*, v, \nu)$ and $u_1$ is the image of $u\in (B,\sigma)_+$ in $S_1$ under the above isomorphism.

This isomorphism is identity on $L$ and maps $v$ to $v$, hence we have an induced isomorphism $A^{(v)}\cong A_1^{(v)}$, where $A_1:=J(S_1,u_1,\mu)$. 
By (\S2 {\bf(b)}), we have 
$$S_1^{(v)}=J(LK,*, v, \nu)^{(v)}\cong J(LK, *, 1, \nu),$$ 
via the isomorphism given by $(l,x)\mapsto (lv, x)$ for $l\in L,~x\in LK$. This isomorphism leaves $L$ invariant and $S_1^{(v)}\subset A_1^{(v)}$. Pulling back $S_1^{(v)}$ to $A^{(v)}$ by the above isomorphism yields a subalgebra of $A^{(v)}$ of the desired form.
\end{proof}

\begin{proposition}\label{thm2:aut-L-Rtriv} 
Let $A$ be a cyclic Albert division algebra over a field $k$ and $L\subset A$ be a cyclic cubic subfield. There there exists $v\in L^{\times}$ such that $A^{(v)}$ contains a Tits process algebra $J(LK,*,1,\nu)$ and $\text{Aut}(A^{(v)}/L)\subset R{\text{\bf Str}}(A)(k)$, here $A^{(v)}$ denotes the $v$-isotope of $A$.
\end{proposition}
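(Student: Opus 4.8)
The first assertion is exactly Lemma \ref{special-subalg}: it produces $v\in L^{\times}$, a quadratic \'etale extension $K/k$ and a norm-one $\nu\in K^{\times}$ so that $A^{(v)}$ contains the Tits process algebra $S':=J(LK,*,1,\nu)$, a $9$-dimensional subalgebra with $L\subset S'$. So I only need to treat the second assertion, and I would first reduce it to a statement internal to $A^{(v)}$. Since the norm of the isotope satisfies $N^{(v)}=N(v)N$ on the common underlying space $A=A^{(v)}$, a linear bijection is a norm similarity of $A^{(v)}$ if and only if it is one of $A$; hence $\text{\bf Str}(A^{(v)})=\text{\bf Str}(A)$ as algebraic groups, and therefore $R\text{\bf Str}(A^{(v)})(k)=R\text{\bf Str}(A)(k)$. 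Thus it suffices to prove $\text{Aut}(A^{(v)}/L)\subset R\text{\bf Str}(A^{(v)})(k)$, working entirely inside the Albert division algebra $A^{(v)}$.

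Now let $\mathcal{S}_L$ denote the set of $9$-dimensional subalgebras of $A^{(v)}$ containing $L$, so that $S'\in\mathcal{S}_L$, and recall (the Remark together with Corollary \ref{Aut-R-triv-S}, both applied to $A^{(v)}$) that for every $S\in\mathcal{S}_L$ the group $\text{Aut}(A^{(v)}/S)$ is of type $A_2$, rational, and contained in $\text{Aut}(A^{(v)},S)\subset R\text{\bf Str}(A^{(v)})(k)$; moreover $L\subset S$ forces $\text{Aut}(A^{(v)}/S)\subset\text{Aut}(A^{(v)}/L)$. The plan is then, given $\phi\in\text{Aut}(A^{(v)}/L)$, to analyse the image $\phi(S')$, which lies in $\mathcal{S}_L$ by the argument of Proposition \ref{action} (applied to $\phi\in\text{Str}(A^{(v)},L)$), and to produce an element $\eta\in\text{Aut}(A^{(v)}/L)\cap R\text{\bf Str}(A^{(v)})(k)$ with $\eta(\phi(S'))=S'$. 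Granting such an $\eta$, the product $\eta\phi$ lies in $\text{Aut}(A^{(v)}/L)$ and stabilises $S'$, hence lies in $\text{Aut}(A^{(v)},S')\subset R\text{\bf Str}(A^{(v)})(k)$ by Corollary \ref{Aut-R-triv-S}; since $R\text{\bf Str}(A^{(v)})(k)$ is a subgroup, $\phi=\eta^{-1}(\eta\phi)\in R\text{\bf Str}(A^{(v)})(k)$, as required.

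The whole difficulty is concentrated in producing the transition element $\eta$, which must fix $L$ pointwise \emph{and} lie in $R\text{\bf Str}(A^{(v)})(k)$. Since each $A_2$-subgroup $\text{Aut}(A^{(v)}/S)$, $S\in\mathcal{S}_L$, fixes $L$ pointwise and is $R$-trivial in $\text{\bf Str}(A^{(v)})$, it suffices to show that the subgroup they generate acts transitively on the $\text{Aut}(A^{(v)}/L)$-orbit of $S'$ in $\mathcal{S}_L$. This is exactly where the cyclic hypothesis enters, through Lemma \ref{tits-pro-skol}: the automorphism $\widetilde{\rho}$ of $S'=J(LK,*,1,\nu)$ it provides extends to an automorphism of $A^{(v)}$ stabilising $S'$, hence lies in $\text{Aut}(A^{(v)},S')\subset R\text{\bf Str}(A^{(v)})(k)$, and since $\widetilde{\rho}$ restricts to the generator $\rho$ of $\text{Gal}(L/k)$ it normalises $\text{Aut}(A^{(v)}/L)$ and permutes the family $\{\text{Aut}(A^{(v)}/S):S\in\mathcal{S}_L\}$, furnishing explicit Tits-process coordinates in which the action of the $A_2$-subgroups on $\mathcal{S}_L$ can be computed. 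Writing $\phi(S')$ explicitly as a second Tits construction over $S'$ and moving it back to $S'$ by such $R$-trivial motions is the computational heart of the proof, and is the step I expect to be the main obstacle; the cyclicity of $L$ and the normal form $J(LK,*,1,\nu)$ from Lemma \ref{special-subalg} are precisely what make these transition maps explicit, and manifestly $R$-trivial.
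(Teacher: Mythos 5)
Your treatment of the first assertion (via Lemma \ref{special-subalg}) and your formal reductions are correct, and your conditional argument --- if one can find $\eta\in\text{Aut}(A^{(v)}/L)\cap R\text{\bf Str}(A^{(v)})(k)$ carrying $\phi(S')$ back to $S'$, then $\phi\in R\text{\bf Str}(A^{(v)})(k)$ --- is valid. But the proof has a genuine gap exactly where you yourself locate ``the computational heart'': you never produce $\eta$, nor do you prove the transitivity statement (that the group generated by the subgroups $\text{Aut}(A^{(v)}/S)$, $S\in\mathcal{S}_L$, acts transitively on the $\text{Aut}(A^{(v)}/L)$-orbit of $S'$) to which you reduce it. Deferring this as ``the main obstacle'' means the essential content of the proposition --- why $\text{Aut}(A^{(v)}/L)$ lies in $R\text{\bf Str}(A)(k)$ --- is not established. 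Worse, the reduction itself points in an unpromising direction: requiring the transition element to fix $L$ \emph{pointwise} is a strong constraint, and nothing in the toolkit of the paper (nor in your sketch) gives control over how $\text{Aut}(A^{(v)}/L)$ moves the subalgebras in $\mathcal{S}_L$.

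The paper closes this step by a quite different and shorter device, which avoids any orbit analysis: the key is to compose with an element that stabilizes $L$ but does \emph{not} fix it pointwise. Let $\widetilde{\rho}\in\text{Aut}(A')$ be the extension (via Lemma \ref{tits-pro-skol} and Prop.~3.2 of \cite{Th-2}) of the generator $\rho$ of $Gal(L/k)$; it stabilizes $S'=J(LK,*,1,\nu)$, hence lies in $R\text{\bf Str}(A')(k)$ by Corollary \ref{Aut-R-triv-S}. Given $\phi\in\text{Aut}(A'/L)$, the composite $\widetilde{\rho}^{-1}\phi$ restricts to $\rho^{-1}\neq 1$ on $L$, so it does not fix $L$ pointwise; by the fixed-point theorem (Lemma \ref{fixedpoint}) it fixes pointwise some cubic subfield $M\subset A'$ with necessarily $M\neq L$. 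Then $S_0:=\langle L,M\rangle$ is $9$-dimensional and is stabilized by $\widetilde{\rho}^{-1}\phi$ (it stabilizes $L$ and fixes $M$ pointwise), so $\widetilde{\rho}^{-1}\phi\in\text{Aut}(A',S_0)\subset R\text{\bf Str}(A')(k)$ by Corollary \ref{Aut-R-triv-S} again, whence $\phi\in R\text{\bf Str}(A')(k)$. Note that the only role of cyclicity and of the normal form $J(LK,*,1,\nu)$ is to guarantee the existence of $\widetilde{\rho}$; no explicit Tits-process coordinates or $R$-trivial ``motions'' of subalgebras are needed. You should replace your transitivity scheme by this fixed-point argument.
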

\begin{proof} By Lemma \ref{special-subalg}, there exists $v\in L^{\times}$ such that $A^{(v)}$ contains a subalgebra isomorphic to a Tits process $J(LK,*,1,\nu)$. We may therefore replace $A^{(v)}$ by an isomorphic Albert division algebra $A'$ that contains $S=J(LK,*,1,\nu)$ as a subalgebra.  

It therefore suffices to prove that $\text{Aut}(A'/L)\subset R\text{\bf Str}(A')(k)$ to prove the assertion.
\vskip1mm
\noindent
{\bf Claim.} We have $\text{Aut}(A'/L)\subset R\text{\bf Str}(A')(k)$.
\vskip1mm
To prove this, let $\rho$ denote a generator of $Gal(L/k)$. By Lemma \ref{tits-pro-skol}, $\rho$ admits an extension $\widetilde{\rho}$ to an automorphism of $J(LK, *, 1, \nu)$, which in turn admits an extension to an automorphism of $A'$ by (Prop. 3.2, \cite{Th-2}), also denoted by $\widetilde{\rho}$. By Corollary \ref{Aut-R-triv-S}, the extension $\widetilde{\rho}$ belongs to $R\text{\bf Str}(A')(k)$. Note that $\widetilde{\rho}(L)=L$.
  
  Now let $\phi\in\text{Aut}(A'/L)$ be arbitrary. Then $\widetilde{\rho}^{-1}\phi\notin \text{Aut}(A'/L)$ since $\widetilde{\rho}$ restricts to $\rho$ on $L$. By Proposition \ref{fixedpoint}, $\widetilde{\rho}^{-1}\phi\in\text{Aut}(A', L)$ fixes a cubic subfield $M\subset A'$ pointwise and $M\neq L$. 

The subalgebra $S_0:=<L,M>$ of $A'$ generated by $L$ and $M$ is $9$-dimensional and is left invariant by $\widetilde{\rho}^{-1}\phi$. Hence, by Corollary \ref{Aut-R-triv-S} we have,
  $$\psi:=\widetilde{\rho}^{-1}\phi\in \text{Aut}(A', S_0)\subset R\text{\bf Str}(A')(k).$$
  Since $\widetilde{\rho}\in R\text{\bf Str}(A')(k)$, it follows that $\phi\in  R\text{\bf Str}(A')(k)$. We have thus shown that $\text{Aut}(A'/L)\subset R\text{\bf Str}(A')(k)$ and hence the claim is settled and the proof is complete.
\end{proof}
\begin{corollary}\label{aut-A-L-Rtriv} Let $A$ be a cyclic Albert division algebra over $k$ and $L\subset A$ a cyclic cubic subfield.
  There exists $v\in L^{\times}$ such that $\text{Aut}(A^{(v)},L)\subset R\text{\bf Str}(A)(k)$.
\end{corollary}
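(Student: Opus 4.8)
The plan is to bootstrap from Proposition \ref{thm2:aut-L-Rtriv}, which already disposes of the automorphisms fixing $L$ pointwise; the only new content is to control the automorphisms of $A^{(v)}$ that permute $L$ nontrivially, and here the cyclicity of $L$ is exactly what is needed. First I would take the same $v\in L^{\times}$ produced by Proposition \ref{thm2:aut-L-Rtriv}, so that $A^{(v)}$ contains a Tits process algebra $J(LK,*,1,\nu)$ and $\text{Aut}(A^{(v)}/L)\subset R\text{\bf Str}(A)(k)$. As in the proof of that proposition I would replace $A^{(v)}$ by the isomorphic Albert division algebra $A'$ containing $J(LK,*,1,\nu)$ as a subalgebra and $L$ as a genuine cubic subfield; since the norms of $A$, $A^{(v)}$ and $A'$ agree up to a nonzero scalar, the structure groups coincide and $R\text{\bf Str}(A')(k)=R\text{\bf Str}(A^{(v)})(k)=R\text{\bf Str}(A)(k)$. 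It therefore suffices to prove $\text{Aut}(A',L)\subset R\text{\bf Str}(A')(k)$.

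Next I would use the short exact sequence attached to $L$. Restriction to $L$ gives a homomorphism $r:\text{Aut}(A',L)\to\text{Aut}(L/k)$ whose kernel is precisely $\text{Aut}(A'/L)$; since $L/k$ is cyclic of degree $3$, the target is $\text{Gal}(L/k)=\langle\rho\rangle$ of order $3$. The crucial ingredient is the automorphism $\widetilde{\rho}$ built in the proof of Proposition \ref{thm2:aut-L-Rtriv} (via Lemma \ref{tits-pro-skol} together with the extension result of \cite{Th-2}): it is an automorphism of $A'$ restricting to the generator $\rho$ on $L$, and it already lies in $R\text{\bf Str}(A')(k)$. Thus $\widetilde{\rho}$ is an explicit preimage of $\rho$ under $r$ lying in $R\text{\bf Str}(A')(k)$, and in particular $r$ is surjective.

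Finally, given an arbitrary $\phi\in\text{Aut}(A',L)$, I would write $r(\phi)=\rho^{\,i}$ with $i\in\{0,1,2\}$. Then $\widetilde{\rho}^{-i}\phi$ restricts to the identity on $L$, so $\widetilde{\rho}^{-i}\phi\in\text{Aut}(A'/L)\subset R\text{\bf Str}(A')(k)$ by Proposition \ref{thm2:aut-L-Rtriv}. Because $R\text{\bf Str}(A')(k)$ is a subgroup of $\text{\bf Str}(A')(k)$ containing $\widetilde{\rho}$, it also contains $\widetilde{\rho}^{\,i}$, whence $\phi=\widetilde{\rho}^{\,i}\,(\widetilde{\rho}^{-i}\phi)\in R\text{\bf Str}(A')(k)$. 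Transporting back through the isomorphism $A'\cong A^{(v)}$ gives $\text{Aut}(A^{(v)},L)\subset R\text{\bf Str}(A)(k)$, as claimed.

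I do not anticipate a serious obstacle, since all the hard work --- producing $v$, the Tits process subalgebra, and the $R$-trivial extension $\widetilde{\rho}$ of a generator of $\text{Gal}(L/k)$ --- was already carried out in Proposition \ref{thm2:aut-L-Rtriv}; the corollary is essentially a one-line group-theoretic reduction of the ``invariant'' case to the ``pointwise-fixed'' case. The only point deserving care is the bookkeeping around the isotope: one must make sure that, after passing to $A'$, the subfield $L$ is identified correctly so that the restriction map $r$ is well defined, and that its surjectivity onto $\text{Gal}(L/k)$ is supplied precisely by $\widetilde{\rho}$.
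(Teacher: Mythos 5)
Your proposal is correct and follows essentially the same route as the paper's proof: take the $v$ from Proposition \ref{thm2:aut-L-Rtriv}, and for $\phi\in\text{Aut}(A^{(v)},L)$ compose with a power of the $R$-trivial extension $\widetilde{\rho}$ of a generator of $Gal(L/k)$ to reduce to $\text{Aut}(A^{(v)}/L)\subset R\text{\bf Str}(A)(k)$. The only difference is cosmetic: the paper renames whatever nontrivial restriction $\phi|_L$ happens to be as $\rho$ and cancels it with $\widetilde{\rho}^{-1}$, whereas you fix one generator and use $\widetilde{\rho}^{-i}$, which is the same argument stated slightly more uniformly.
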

\begin{proof} Let $v\in L^{\times}$ be such that $\text{Aut}(A^{(v)}/L)\subset R\text{\bf Str}(A)(k)$, such $v$ exists by Proposition \ref{thm2:aut-L-Rtriv}. Let $\phi\in \text{Aut}(A^{(v)}, L)$. We may assume, by Proposition \ref{thm2:aut-L-Rtriv} that $\phi|L=\rho\neq 1$. Then $\widetilde{\rho}^{-1}\phi\in \text{Aut}(A^{(v)}/L)$ and $\widetilde{\rho}\in R\text{\bf Str}(A)(k)$, here $\widetilde{\rho}$ denotes an extension of $\rho$ to an automorphism of $A$ as constructed in the proof of Proposition \ref{thm2:aut-L-Rtriv} above. It follows that $\phi\in R\text{\bf Str}(A)(k)$. 
\end{proof}

\noindent
{\bf Main theorem.} We now proceed to prove the $R$-triviality of $\text{\bf Str}(A)$ for an Albert division algebra $A$ over a field $k$. 

The results proved in the above subsection are valid for \emph{cyclic} Albert division algebras. The following key result from (\cite{Th-3}) connects these with general Albert division algebras:
\begin{proposition}\label{cyclicity} Let $A$ be an Albert division algebra over a field $k$. Then there exists $v\in A^{\times}$ such that 
$A^{(v)}$, the $v$-isotope of $A$, is cyclic, i.e., $A^{(v)}$ contains a cyclic cubic subfield.
\end{proposition}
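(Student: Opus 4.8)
The plan is to reduce the statement to the genuine second Tits construction and then convert it into a question about unitary involutions on a degree-$3$ division algebra, where isotopy supplies exactly the freedom one needs. First I would write $A=J(B,\sigma,u,\mu)$ as a second Tits construction and dispose of the split-centre case. If $K:=Z(B)$ is split, then $(B,\sigma)=(D\times D^{\circ},\epsilon)$ with $D$ a degree-$3$ central division algebra over $k$, so $A$ is a first Tits construction. By Wedderburn's theorem every central division algebra of degree $3$ is cyclic, hence $D$ contains a cyclic cubic subfield $L/k$; since $D_{+}\hookrightarrow A$, the algebra $A$ itself is already cyclic and one takes $v=1$. From now on I assume $K/k$ is a separable quadratic field extension and $B$ is a degree-$3$ central division algebra over $K$ carrying the unitary involution $\sigma$.

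Next I would exploit the description of isotopes in \S2 {\bf(b)}: for an invertible $v\in(B,\sigma)_{+}$ one has $A^{(v)}\cong J(B,\sigma_{v},uv^{\#},N(v)\mu)$ with $\sigma_{v}=\mathrm{Int}(v)\circ\sigma$. Two observations make this the right tool. On one hand, as $v$ ranges over the $\sigma$-symmetric units of $B$, the involution $\sigma_{v}$ ranges over (essentially all) unitary involutions of $B$ by Skolem--Noether, so isotopy lets me replace $\sigma$ by any unitary involution I please. On the other hand, the division property is automatically preserved: since $N(v)=N_{B}(v)$ is a reduced norm, $N(v)\mu$ lies in the norm group $N_{B}(B^{\times})$ if and only if $\mu$ does, whence $A^{(v)}$ is a division algebra whenever $A$ is. Thus the entire content becomes a statement purely about $B$ together with its family of unitary involutions, with no division-theoretic obstruction introduced by the isotopy.

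Concretely, I would reduce the proposition to the following claim: there exist a unitary involution $\sigma'$ on $B$ and a cyclic cubic subfield $E/K\subset B$ with $\sigma'(E)=E$ such that $\sigma'|_{E}$ commutes with $\mathrm{Gal}(E/K)$. Granting this, set $L:=E^{\sigma'}$; then $E=LK$ and $[L:k]=3$, and because the order-$3$ generator $\tau$ of $\mathrm{Gal}(E/K)$ commutes with $\sigma'$ it preserves $L=E^{\sigma'}$ and restricts there to a nontrivial automorphism of order $3$, so $L/k$ is cyclic. Choosing $v$ with $\sigma_{v}=\sigma'$ then makes $A^{(v)}$ contain the cyclic cubic subfield $L$, as desired. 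A cyclic $E/K$ exists in $B$ by Wedderburn, so the heart of the matter is to produce, simultaneously, a unitary involution restricting on some such $E$ to an order-$2$ conjugate-linear automorphism commuting with the Galois action---equivalently, a $\mathbb{Z}/6$-Galois \'etale subfield $E=LK$ of $B$ with $L/k$ cyclic and $K$ its quadratic part, stabilized by an admissible involution.

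I expect this last step to be the main obstacle, for it is a genuine descent problem: one must control the $\mathbb{Z}/3$-action on $E/K$ and the order-$2$ conjugate-linear part of the involution at the same time, so that the descended field $L=E^{\sigma'}$ is actually Galois over $k$. I would attack it by fixing a cyclic $E_{0}/K\subset B$ via Wedderburn, analyzing the torsor of conjugate-linear involutive $k$-automorphisms of $E_{0}$ under $\mathrm{Gal}(E_{0}/K)$, and using Skolem--Noether to realize a \emph{commuting} one as the restriction of a unitary involution of $B$. The arbitrary-characteristic hypothesis forces me to argue throughout with the separable cubic resolvent and the \'etale discriminant algebra of $L$ rather than with a ``discriminant is a square'' criterion, and to track separability and \'etale-ness carefully when $\mathrm{char}\,k\in\{2,3\}$. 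The preceding paragraphs show that once this compatible pair $(\sigma',E)$ is in hand, everything else---the isotope formula and the preservation of division---is formal.
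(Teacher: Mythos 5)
Your reductions are correct, but you should know that the paper itself does not prove this proposition at all: it imports it as "the key result from \cite{Th-3}'' (the author's separate paper \emph{The cyclicity problem for Albert algebras}), where it is the main theorem. Measured against that proof, what you have done is reproduce its correct opening moves: the split-centre case via Wedderburn (cyclicity of first constructions is \cite{PR2}), the isotope formula $A^{(v)}\cong J(B,\sigma_v,uv^{\#},N(v)\mu)$, the Skolem--Noether fact that the involutions $\sigma_v=\mathrm{Int}(v)\circ\sigma$, with $v$ a $\sigma$-symmetric unit, exhaust the unitary involutions of $B$ over $K/k$, and the consequent reformulation: it suffices to produce one unitary involution $\sigma'$ and one $\sigma'$-stable cyclic cubic $E/K\subset B$ on which $\sigma'$ commutes with $\mathrm{Gal}(E/K)$, i.e.\ such that $L:=E^{\sigma'}$ is cyclic over $k$. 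But that reformulation \emph{is} the theorem --- it is equivalent to saying that $(B,\sigma')_+$ contains a cyclic cubic subfield of $k$ for some unitary involution $\sigma'$ --- and you stop there. So the proposal is a correct restatement plus an unproven claim, not a proof.

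Moreover, the attack you sketch on that claim would fail as stated. If \emph{any} unitary involution $\sigma'$ (which is $k$-linear and acts as conjugation on $K$) stabilizes a cubic extension $E_0/K$ that is cyclic over $K$, then $\mathrm{Aut}_k(E_0)$ contains $\mathrm{Gal}(E_0/K)\cong\mathbb{Z}/3\mathbb{Z}$ together with the order-two element $\sigma'|_{E_0}$, hence has order at least $6=[E_0:k]$; thus $E_0/k$ is forced to be Galois, and the commuting condition forces $\mathrm{Gal}(E_0/k)\cong\mathbb{Z}/6\mathbb{Z}$. These are intrinsic properties of the field $E_0$, not something one can arrange afterwards by Skolem--Noether or by twisting the involution: a maximal subfield cyclic over $K$ produced by Wedderburn need not even be normal over $k$, and for such an $E_0$ the set of unitary involutions stabilizing it is \emph{empty}, so the ``torsor of conjugate-linear involutive automorphisms'' you propose to analyze need not exist. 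The genuine difficulty is to show that $B$ contains a maximal subfield $E\supset K$ that is abelian of degree $6$ over $k$ (equivalently, after changing the involution within its conjugacy class, a cyclic cubic $k$-subfield inside the symmetric elements); this is exactly what \cite{Th-3} proves, using input of a different nature --- Albert's theorem on involutorial division algebras and the Haile--Knus--Rost--Tignol theory of distinguished unitary involutions, adapted to arbitrary characteristic --- none of which appears in your outline. The gap sits precisely at the step you flag as the ``main obstacle,'' and the route you indicate for closing it cannot work for a fixed Wedderburn subfield.
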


The following is the key to the proof of the Main theorem. Recall that for an extension $N/k$ of the base field $k$ and an algebra $S$ over $k$, we denote $S_N:=S\otimes_k N$. 
\begin{theorem}\label{Str-A-L-Rtriv} Let $A$ be a cyclic Albert division algebra over a field $k$ of arbitrary characteristic and $L\subset A$ a cyclic cubic subfield. Then $\text{Str}(A,L)\subset R\text{\bf Str}(A)(k)$.
\end{theorem}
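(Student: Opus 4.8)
The plan is to reduce an element $\psi\in\text{Str}(A,L)$ to the single piece of data $a:=\psi(1)\in L^\times$ that controls its action on $L$, and then to kill the class of $\psi$ in $\text{Str}(A)/R\text{\bf Str}(A)(k)$ by showing it is simultaneously $2$- and $3$-torsion. First I would fix a convenient model for $A$. By Proposition \ref{thm2:aut-L-Rtriv} and its proof there are $v\in L^\times$ and a Jordan isomorphism $\Theta\colon A^{(v)}\to A'$ onto an Albert division algebra $A'$ that contains the Tits process $S=J(LK,*,1,\nu)\supseteq L$, with $\Theta(L)=L$, such that $\text{Aut}(A'/L)\subseteq R\text{\bf Str}(A')(k)$ and the generator $\rho$ of $\text{Gal}(L/k)$ extends to $\widetilde{\rho}\in\text{Aut}(A')\cap R\text{\bf Str}(A')(k)$. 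Since the norm of $A^{(v)}$ is proportional to that of $A$, one has $\text{\bf Str}(A^{(v)})=\text{\bf Str}(A)$, $R\text{\bf Str}(A^{(v)})(k)=R\text{\bf Str}(A)(k)$ and $\text{Str}(A^{(v)},L)=\text{Str}(A,L)$ as subgroups of $GL(A)$; and conjugation by the norm isomorphism $\Theta$ carries these isomorphically onto the corresponding objects for $A'$. Hence it suffices to prove $\text{Str}(A',L)\subseteq R\text{\bf Str}(A')(k)$, i.e. I may assume from the outset that $A=A'$ contains $L$ as an honest cyclic cubic subfield, that $\text{Aut}(A/L)\subseteq R\text{\bf Str}(A)(k)$, and that $\rho$ extends to $\widetilde{\rho}\in\text{Aut}(A)\cap R\text{\bf Str}(A)(k)$.

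Next I would take $\psi\in\text{Str}(A,L)$ and set $a:=\psi(1)\in L^\times$. Since $L/k$ is cyclic, the norm similarity $\psi|_L$ of the cubic field $L$ factors as $\psi|_L=m_a\circ\rho^{\,i}$, where $m_a$ is multiplication by $a$ in $L$. Replacing $\psi$ by $\psi\widetilde{\rho}^{-i}$, which alters $\psi$ only by an element of $R\text{\bf Str}(A)(k)$, I may assume $\psi|_L=m_a$. The goal is then to show that the class $\overline{\psi}$ in $\text{Str}(A)/R\text{\bf Str}(A)(k)$ satisfies $\overline{\psi}^{\,2}=1$ and $\overline{\psi}^{\,3}=1$, whence $\overline{\psi}=\overline{\psi}^{\,3}(\overline{\psi}^{\,2})^{-1}=1$. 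For the square I would use that $U_a\in R\text{\bf Str}(A)(k)$ (it is joined to the identity by the rational curve $t\mapsto U_{(1-t)\,1+t\,a}$), that $U_a$ leaves $L$ invariant, and that $U_a|_L=m_{a^2}=\psi^2|_L$; thus $U_a^{-1}\psi^2$ fixes $L$ pointwise and so lies in $\text{Aut}(A/L)\subseteq R\text{\bf Str}(A)(k)$, giving $\psi^2\in R\text{\bf Str}(A)(k)$. For the cube I would form $\Psi:=\psi\cdot(\widetilde{\rho}\psi\widetilde{\rho}^{-1})\cdot(\widetilde{\rho}^{2}\psi\widetilde{\rho}^{-2})$; restricting to $L$ collapses it to $\Psi|_L=m_a\,m_{\rho(a)}\,m_{\rho^2(a)}=m_{N_{L/k}(a)}$, multiplication by the \emph{scalar} $N_{L/k}(a)\in k^\times$. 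Hence $R_{N_{L/k}(a)}^{-1}\Psi$ fixes $L$ pointwise, so it lies in $\text{Aut}(A/L)\subseteq R\text{\bf Str}(A)(k)$; as the scalar homothety $R_{N_{L/k}(a)}$ is $R$-trivial, $\Psi\in R\text{\bf Str}(A)(k)$. Finally, because $\widetilde{\rho}\in R\text{\bf Str}(A)(k)$ and $R\text{\bf Str}(A)(k)$ is normal, each conjugate $\widetilde{\rho}^{\,j}\psi\widetilde{\rho}^{-j}$ is congruent to $\psi$ modulo $R\text{\bf Str}(A)(k)$, so $\overline{\Psi}=\overline{\psi}^{\,3}$ and therefore $\overline{\psi}^{\,3}=1$.

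The main obstacle is the multiplicative part $a=\psi(1)\in L^\times$. The operators manifestly in $R\text{\bf Str}(A)(k)$ that rescale $L$ are the $U$-operators and their products, and on the commutative field $L$ these realise only the \emph{squares} $m_{a^2}$; so the naive argument delivers $\psi^2\in R\text{\bf Str}(A)(k)$ but leaves an a priori $2$-torsion obstruction (reflecting that $a$ need not be a norm from the quadratic extension $LK/L$ attached to a $9$-dimensional subalgebra). The decisive point is that the cyclicity of $L$ removes it: conjugating by the $R$-trivial extensions $\widetilde{\rho}^{\,j}$ of the Galois generator and multiplying the three conjugates forces the product on $L$ to be multiplication by $N_{L/k}(a)\in k^\times$, a scalar and hence $R$-trivial, which yields $\psi^3\in R\text{\bf Str}(A)(k)$; since $\gcd(2,3)=1$, the two facts combine to give $\psi\in R\text{\bf Str}(A)(k)$. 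A secondary technical point, handled in the first paragraph, is that passing to the isotope $A^{(v)}$ twists the Galois action on $L$; this is why I prefer to transport the whole problem through $\Theta$ to the clean model $A'$, where $L$ retains its original field structure, before running the two torsion computations.
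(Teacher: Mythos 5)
Your proof is correct and follows essentially the same route as the paper's: reduce via Lemma \ref{special-subalg} and Proposition \ref{thm2:aut-L-Rtriv} to a model containing $J(LK,*,1,\nu)$ together with the Galois extension $\widetilde{\rho}\in R\text{\bf Str}(A)(k)$, then show $\psi^2$ and $\psi^3$ lie in $R\text{\bf Str}(A)(k)$ (using $U_a$, $\text{Aut}(A/L)\subset R\text{\bf Str}(A)(k)$, and the product of the three $\widetilde{\rho}$-conjugates of $\psi$), and conclude by coprimality. The only cosmetic differences are that you normalize away the Galois part of $\psi|_L$ at the outset and absorb the scalar $N_{L/k}(a)$ into an $R$-trivial homothety, whereas the paper first reduces $\psi$ to a norm isometry (so $N_L(a)=1$) and then treats the cases $\gamma=1$ and $\gamma\neq 1$ separately.
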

\begin{proof} We make some reductions for the proof. We first note that the structure group of an Albert algebra is isotopy invariant, i.e., $\text{\bf Str}(A^{(w)})=\text{\bf Str}(A)$ for any invertible $w\in A$. We have therefore 
    $$\text{Str}(A^{(w)})=\text{Str}(A)~\text{and}~R\text{\bf Str}(A^{(w)})(k)=R\text{\bf Str}(A)(k),$$ 
    for any $w\in A$ invertible.

    In light of this fact, by passing to an isotope by a suitable element of $L^{\times}$, by (\S2 {\bf (b)}) and Lemma \ref{special-subalg}, we may assume that $A$ contains a Tits process subalgebra $S:=J(LK,*, 1, \nu)$ 

    Hence, by Proposition \ref{thm2:aut-L-Rtriv} and Corollary \ref{aut-A-L-Rtriv}, we have $\text{Aut}(A,L)\subset R\text{\bf Str}(A)(k)$.

    \noindent
    Now let $\psi\in\text{Str}(A,L)$ and $\rho$ be a nontrivial Galois automorphism of $L$ over $k$. We extend $\rho$ to an automorphism $\widetilde{\rho}$ of $S$ (see Lemma \ref{tits-pro-skol}) and further extend $\widetilde{\rho}$ to an automorphism of $A$, also denoted by $\widetilde{\rho}$ (see \cite{Th-2}, Prop. 3.2). Then $\widetilde{\rho}\in R\text{\bf Str}(A)(k)$ by Corollary \ref{Aut-R-triv-S}.

    Let $a:=\psi(1)$. Then $a\in L^{\times}$ and the norm similarity $R_{N(a)^{-1}}U_a\psi\in\text{Isom}(A)$. Since scalar homotheties and $U$-operators belong to $R\text{\bf Str}(A)(k)$ (see Lemma 4.1, \cite{Th-2})), we may assume, at the outset, that $\psi$ is an isometry of the norm on $A$. Then for $a:=\psi(1)\in L^{\times},~N(a)=1$.
    
    Since $\psi(L)=L$ and $\psi(1)=a$, by (\cite{J0}, Thm.1), $\psi_|L=R_a\gamma$, where $\gamma$ is an element of $Gal(L/k)$. If $\gamma=1$ we set $\psi':=\psi\widetilde{\rho}$ and $\psi'=\psi$ if $\gamma\neq 1$.

    Then $\psi'\in\text{Str}(A,L)$ and we have, when $\gamma=1$,
     $$\psi'^3(1)=(\psi\widetilde{\rho})(\psi\widetilde{\rho})(\psi\widetilde{\rho})(1)=a\rho(a)\rho^2(a)=N(a)=1.$$
     When $\gamma\neq 1$, we must have $\gamma=\rho$ or $\gamma=\rho^2$. Without loss of generality, assume $\gamma=\rho$. Since $\psi'=\psi$ in this case, we have
     $$(\psi')^3(1)=\psi^3(1)=(R_a\rho)^3(1)=a\rho(a)\rho^2(a)=N_L(a)=1.$$ 
     Hence, we have, in either case, $\psi'^3\in\text{Aut}(A,L)\subset R\text{\bf Str})(A)(k)$. But $\widetilde{\rho}\in R\text{\bf Str}(A)(k)$, hence, when $\gamma=1$, we see that
     $(\psi\widetilde{\rho})(\psi\widetilde{\rho})\psi\in R\text{\bf Str}(A)(k)$. Conjugating this successively by $\psi^{-1}$, using normality of $R\text{\bf Str}(A)(k)$ in $\text{Str}(A)$ and using the fact that $\widetilde{\rho}\in R\text{\bf Str}(A)(k)$, it follows that $\psi^3\in R\text{\bf Str}(A)(k)$. In the case $\gamma\neq 1$, $\psi^3\in\text{Aut}(A,L)\subset R\text{\bf Str}(A)(k)$. Hence, we always have $\psi^3\in R\text{\bf Str}(A)(k)$. 

    Now, if $\gamma=1$, the norm similarity $\eta:=U_{a^{-1}}\psi^2$ is an automorphism of $A$ and $\eta(L)=L$. Thus $\eta\in R\text{\bf Str}(A)(k)$. Since $U$-operators belong to $R\text{\bf Str}(A)(k)$(see Lemma 4.1, \cite{Th-2}), we see that $\psi^2\in R\text{\bf Str}(A)(k)$.

    If $\gamma\neq 1$, then $\gamma=\rho$ or $\gamma=\rho^2$. We assume $\gamma=\rho$ without loss of generality. Then, since $\psi_|L=R_a\gamma$, the norm similarity $\eta:=\psi\widetilde{\rho}^{-1}$ of $A$ satisfies $\eta(L)=L$ and $\eta_|L=R_a$ and we are in the earlier case. Hence we conclude that $\eta^2\in R\text{\bf Str}(A)(k)$ and hence, arguing as above, we get $\psi^2\in R\text{\bf Str}(A)(k)$. It therefore follows that $\psi\in R\text{\bf Str}(A)(k)$. So we have proved the result.
  \end{proof}

\begin{corollary}\label{aut-A-Rtriv} Let $A$ be a cyclic Albert division algebra over $k$ and $L\subset A$ a cyclic cubic subfield. Let $v\in L^{\times}$ be such that $\text{Aut}(A^{(v)},L)\subset R\text{\bf Str}(A)(k)$. Then $\text{Aut}(A^{(v)})\subset R\text{\bf Str}(A)(k)$.
\end{corollary}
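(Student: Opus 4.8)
The plan is to reduce everything to the isotope $A^{(v)}$ and then to split on whether a given automorphism stabilizes $L$. Throughout I would invoke the isotopy invariance of the structure group recorded at the start of the proof of Theorem \ref{Str-A-L-Rtriv}, namely $\text{\bf Str}(A^{(v)})=\text{\bf Str}(A)$ and hence $R\text{\bf Str}(A^{(v)})(k)=R\text{\bf Str}(A)(k)$. Since $v\in L^{\times}$ and isotoping a division algebra by an invertible element again yields a division algebra, $A^{(v)}$ is once more a cyclic Albert division algebra and (the $v$-isotope of) $L$ remains a cyclic cubic subfield of it. Consequently Theorem \ref{Str-A-L-Rtriv} applies to the pair $(A^{(v)},L)$ and gives $\text{Str}(A^{(v)},L)\subset R\text{\bf Str}(A^{(v)})(k)=R\text{\bf Str}(A)(k)$. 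This is the input I keep in reserve for the elements that already stabilize $L$, and it in fact subsumes the hypothesis $\text{Aut}(A^{(v)},L)\subset R\text{\bf Str}(A)(k)$ of the corollary, since $\text{Aut}(A^{(v)},L)\subset\text{Str}(A^{(v)},L)$.

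Now take an arbitrary $\phi\in\text{Aut}(A^{(v)})$. If $\phi(L)=L$, then $\phi\in\text{Aut}(A^{(v)},L)\subset\text{Str}(A^{(v)},L)\subset R\text{\bf Str}(A)(k)$ and we are done. So the only genuine case is $\phi(L)\neq L$. Here $\phi(L)$ is again a cubic subfield, distinct from $L$, so the subalgebra $J:=\langle L,\phi(L)\rangle$ they generate contains two distinct cubic subfields and hence, by the classification of subalgebras in \S2 \textbf{(d)}, must be $9$-dimensional.

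The heart of the argument, and the step I expect to be the main obstacle, is to transport $\phi(L)$ back onto $L$ by an element of $\text{Str}(J)$. I would regard the inclusion $\iota:L\hookrightarrow J$ and the restriction $\phi|_L:L\to J$ (with image $\phi(L)$) as two isotopic embeddings of the cubic field $L$ into the $9$-dimensional algebra $J$, exactly as in the proof of Proposition \ref{invariant-cubic}. Applying (\cite{GP}, Thm. 5.2.7) then yields $\theta\in\text{Str}(J)$ and $w\in L$ with $N_L(w)=1$ such that $\theta\circ(\phi|_L)=\iota\circ R_w$; concretely $\theta(\phi(x))=wx$ for all $x\in L$, whence $\theta\phi(L)=L$. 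By (Thm. 4.4, \cite{Th-2}) I extend $\theta$ to a norm similarity of $A^{(v)}$, still denoted $\theta$, whose restriction to $J$ is unchanged, so that $\theta(J)=J$ and $\theta\in\text{Str}(A^{(v)},J)$.

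It then remains to assemble the pieces. Since $J$ is $9$-dimensional, Proposition \ref{Str-R-triv} gives that $\text{\bf Str}(A^{(v)},J)$ is $R$-trivial, hence $\theta\in R\text{\bf Str}(A^{(v)})(k)=R\text{\bf Str}(A)(k)$. On the other hand $\theta\phi$ is a norm similarity of $A^{(v)}$ stabilizing $L$, i.e. $\theta\phi\in\text{Str}(A^{(v)},L)$, so $\theta\phi\in R\text{\bf Str}(A)(k)$ by Theorem \ref{Str-A-L-Rtriv} applied to $(A^{(v)},L)$. Because $R\text{\bf Str}(A)(k)$ is a normal subgroup of $\text{Str}(A)$, I conclude $\phi=\theta^{-1}(\theta\phi)\in R\text{\bf Str}(A)(k)$, which finishes the proof. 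The only points I would check with care are that $\phi|_L$ and $\iota$ genuinely qualify as isotopic embeddings in the sense of \cite{GP}, and that the extension of $\theta$ from $\text{Str}(J)$ to $\text{Str}(A^{(v)})$ indeed leaves $J$ invariant; both follow the template already established for Proposition \ref{invariant-cubic}.
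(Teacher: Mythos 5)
Your proposal is correct and follows essentially the same route as the paper: reduce to the case $\phi(L)\neq L$, form the $9$-dimensional subalgebra $J=\langle L,\phi(L)\rangle$, apply the Skolem--Noether-type result (\cite{GP}, Thm. 5.2.7) to produce $\theta\in\text{Str}(J)$ with $\theta\phi(L)=L$, extend $\theta$ via (Thm. 4.4, \cite{Th-2}), and conclude using Proposition \ref{Str-R-triv} and Theorem \ref{Str-A-L-Rtriv}. Your extra care in tracking the identification $R\text{\bf Str}(A^{(v)})(k)=R\text{\bf Str}(A)(k)$ and in applying Theorem \ref{Str-A-L-Rtriv} to the pair $(A^{(v)},L)$ only makes explicit what the paper leaves implicit.
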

\begin{proof} Let $\phi\in\text{Aut}(A^{(v)})$. We may assume $\phi(L)\neq L$. Then $J:=<L,\phi(L)>$ is a $9$-dimensional subalgebra of $A^{(v)}$. Consider the embeddings $\phi:L\rightarrow J$ and the inclusion $\iota:L\rightarrow J$. Then, by (\cite{GP}, Thm. 5.2.7), there exists $\theta\in \text{Str}(J)$
        such that $\theta\phi=\iota R_w$ for some element $w\in L$ with $N_L(w)=1$. Then we have, for any $x\in L$,
        $ \theta\phi(x)=\iota R_w(x)$. Hence $\theta(\phi(x))=wx\in L~\forall x\in L$. Therefore $\theta\phi(L)=L$.

        By (Thm. 4.4, \cite{Th-2}), we can extend $\theta$ to a norm similarity of $A$,
        which we continue to denote by $\theta$. Then $\theta\in \text{Str}(A,J)$ and hence $\theta\in R\text{\bf Str}(A)(k)$ by 
Proposition \ref{Str-R-triv}. Also $\theta\phi\in\text{Str}(A,L)\subset R\text{\bf Str}(A)(k)$, by Theorem \ref{Str-A-L-Rtriv}. Therefore $\phi\in R\text{\bf Str}(A)(k)$. 
\end{proof}
The follwoing computation will be needed:
\begin{lemma}\label{computational} Let $A$ be a Jordan algebra over a field $k$ and $\psi\in\text{Str}(A)$. Then, for $v:=\psi(1)\in A^{\times}$ we have $\text{Aut}(A^{(v^{-1})})=\psi\text{Aut}(A)\psi^{-1}$.
\end{lemma}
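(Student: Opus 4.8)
The plan is to reduce this to the general principle that an isomorphism of unital Jordan algebras conjugates their automorphism groups, and to extract the relevant isomorphism from the facts recalled in \S2 \textbf{(c)}. The crucial input is that a norm similarity is an isomorphism onto a suitable isotope: since $\psi\in\text{Str}(A)$ and $v:=\psi(1)\in A^{\times}$, the map $\psi\colon A\rightarrow A^{(v^{-1})}$ is an \emph{isomorphism} of Jordan algebras. Indeed $\psi$ carries the identity $1$ of $A$ to $v$, and $v$ is precisely the identity element of the isotope $A^{(v^{-1})}$ (its base point is $(v^{-1})^{-1}=v$), which matches the statement ``$\psi\colon J\rightarrow J^{(a^{-1})}$ is an isomorphism'' with $a=\psi(1)=v$. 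Granting this, the lemma becomes the formal assertion that for an isomorphism $f\colon X\rightarrow Y$ of (unital) Jordan algebras one has $\text{Aut}(Y)=f\,\text{Aut}(X)\,f^{-1}$, applied with $f=\psi$, $X=A$ and $Y=A^{(v^{-1})}$.

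First I would establish the inclusion $\psi\,\text{Aut}(A)\,\psi^{-1}\subseteq\text{Aut}(A^{(v^{-1})})$. For $\alpha\in\text{Aut}(A)$, the composite $\psi\alpha\psi^{-1}$ is a chain of Jordan maps $A^{(v^{-1})}\xrightarrow{\psi^{-1}}A\xrightarrow{\alpha}A\xrightarrow{\psi}A^{(v^{-1})}$, i.e. an isomorphism $A^{(v^{-1})}\to A$, followed by an automorphism of $A$, followed by the isomorphism $A\to A^{(v^{-1})}$; hence $\psi\alpha\psi^{-1}$ is an automorphism of $A^{(v^{-1})}$. As a consistency check it fixes the identity $v$ of the isotope, since $\psi^{-1}(v)=1$, $\alpha(1)=1$ and $\psi(1)=v$. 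Next I would establish the reverse inclusion: given $\beta\in\text{Aut}(A^{(v^{-1})})$, the same composition argument shows $\psi^{-1}\beta\psi\in\text{Aut}(A)$, and then $\beta=\psi(\psi^{-1}\beta\psi)\psi^{-1}\in\psi\,\text{Aut}(A)\,\psi^{-1}$. Combining the two inclusions yields the desired equality.

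The argument is essentially bookkeeping, so I do not expect a genuine obstacle; the only point that needs care is getting the isotope parameter right and confirming that the cited description in \S2 \textbf{(c)} applies. Concretely, one must be sure that $\psi$ lands in $A^{(v^{-1})}$ rather than $A^{(v)}$ — this is forced by the requirement that an isomorphism of unital Jordan algebras preserve identities, together with the isotope convention $c^{(w)}=w^{-1}$, which makes $v=\psi(1)$ the identity of $A^{(v^{-1})}$. Since in our applications $A$ is an Albert (division) algebra, hence a degree-$3$ Jordan algebra, the isomorphism statement of \S2 \textbf{(c)} is available in exactly the needed generality, and no further structural input is required.
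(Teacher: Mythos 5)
Your proof is correct, and it is worth noting where it diverges from the paper's. The paper does not invoke the fact that $\psi\colon A\rightarrow A^{(v^{-1})}$ is an isomorphism onto the isotope. Instead it observes that $\psi f\psi^{-1}$ is a norm similarity of $A^{(v^{-1})}$ (the structure group being insensitive to passing to an isotope, since the norms are proportional), computes directly that $\psi f\psi^{-1}(v)=\psi f\psi^{-1}(\psi(1))=\psi f(1)=\psi(1)=v$, and then applies the criterion of \S2 \textbf{(c)} that a norm similarity fixing the identity element is an automorphism; the reverse inclusion is left implicit (it follows by the same computation applied to $\psi^{-1}$, or by symmetry). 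Your route leans on the other fact recalled in \S2 \textbf{(c)} — Jacobson's theorem that a norm similarity $\psi$ is an isomorphism of $A$ onto $A^{(a^{-1})}$ with $a=\psi(1)$ — and then reduces the lemma to the purely formal statement that an isomorphism of unital Jordan algebras conjugates one automorphism group onto the other. What your version buys is that the equality, i.e.\ both inclusions, is established explicitly, and the identity-fixing computation is demoted to a consistency check; what it costs is reliance on the deeper isomorphism theorem where the paper needs only the fixed-point criterion for automorphisms. Both arguments are a few lines of bookkeeping resting on \S2 \textbf{(c)}, so the difference is one of which key fact carries the load, not of substance.
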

\begin{proof} Clearly $\psi f\psi^{-1}$ is a norm similarity for $A^{(v^{-1})}$. Thus, to prove the assertion, it suffices to check that for $f\in\text{Aut}(A)$, $\psi f\psi^{-1}$ is an automorphism of $A^{(v^{-1})}$ and hence it suffices to check $\psi f\psi^{-1}$ fixes the identity element $v=\psi(1)$ of $A^{(v^{-1})}$. We have,
  $$\psi f\psi^{-1}(v)=\psi f\psi^{-1}(\psi(1))=\psi f(1)=\psi(1)=v.$$
  This proves the assertion. 
  \end{proof}
\begin{theorem}\label{Str-A-M-Rtriv} Let $A$ be an Albert division algebra over $k$. Then there exists $v\in A$ such that for any separable cubic subfield $M\subset A^{(v)}$, we have
  $\text{Str}(A^{(v)},M)\subset R\text{\bf Str}(A)(k)$.
  \end{theorem}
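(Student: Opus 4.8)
The plan is to pass to a cyclic isotope and then split according to whether $M$ is Galois. By Proposition \ref{cyclicity} I would choose $v\in A^{\times}$ so that $A':=A^{(v)}$ is cyclic. Since the norm of $A'$ is a nonzero scalar multiple of that of $A$, one has $\text{\bf Str}(A')=\text{\bf Str}(A)$ as groups of linear maps, hence $R\text{\bf Str}(A')(k)=R\text{\bf Str}(A)(k)$, and for any separable cubic subfield $M\subset A'$ the subgroup $\text{Str}(A',M)=\text{Str}(A,M)$ is the same. Thus it suffices to show $\text{Str}(A',M)\subset R\text{\bf Str}(A')(k)$ for every separable cubic $M\subset A'$. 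Because all these groups are also unchanged under a further isotopy by an element of $M^{\times}$, I may invoke Lemma \ref{special-subalg} to assume $M$ lies inside a Tits process subalgebra $S=J(MK,*,1,\nu)$, a $9$-dimensional subalgebra of $A'$.

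If $M$ is cyclic the statement is immediate: $A'$ is cyclic and $M$ is a cyclic cubic subfield, so Theorem \ref{Str-A-L-Rtriv} applied with $L=M$ gives $\text{Str}(A',M)\subset R\text{\bf Str}(A')(k)$ at once. The real content lies in the non-Galois case, which I would treat by descent along the quadratic discriminant extension $\Delta/k$ of $M$, over which $M_{\Delta}/\Delta$ becomes cyclic with generator $\rho$. Given $\psi\in\text{Str}(A',M)$, after dividing by the scalar homothety $R_{N(a)^{-1}}$ and the operator $U_{a}$ (both in $R\text{\bf Str}(A')(k)$ by Lemma 4.1 of \cite{Th-2}) I may assume $\psi$ is an isometry with $a:=\psi(1)\in M^{\times}$ and $N(a)=1$; since $M$ is non-Galois, $\text{Aut}_k(M)=\{1\}$ and hence $\psi_{|M}=R_{a}$ by (\cite{J0}, Thm.~1).

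The heart of the argument is to produce an automorphism of $A'$ that displaces $M$ yet is $R$-trivial. Realizing $S=(B,\sigma)_+$ as the $\tau$-fixed points of a crossed-product algebra $B'$ built over $M\otimes_k\Delta\otimes_k K$, the $\Delta$-linear extension $\widetilde{\rho}$ of $\rho$ to $S_{\Delta}$ (Lemma \ref{tits-pro-skol}) commutes with $\tau$ and descends to an automorphism $\phi$ of $S$; since $M$ is non-Galois one has $\phi(M)\neq M$, and extending $\phi$ to $A'$ (Prop.~3.2 of \cite{Th-2}) with $\phi(S)=S$ and $\dim S=9$ gives $\phi\in R\text{\bf Str}(A')(k)$ by Corollary \ref{Aut-R-triv-S}. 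A base-change computation to $\Delta$, where $\psi_{|M_{\Delta}}=R_a$ and $\phi_{|M_{\Delta}}=\rho$, shows $(\psi\phi)^{3}$ fixes $M$ pointwise, so $(\psi\phi)^{3}\in\text{Aut}(A'/M)$; then $\phi(\psi\phi)^{3}$ does not fix $M$ pointwise, fixes some cubic subfield $N$ (Lemma \ref{fixedpoint}) with $N\neq M$, and leaves the $9$-dimensional subalgebra $S'=\langle M,N\rangle$ invariant. The invariance is checked over $\Delta$ (where the map factors as $\widetilde{\rho}$ composed with something fixing $M_{\Delta}$) and descended since the map is $k$-rational, whence $\phi(\psi\phi)^{3}\in R\text{\bf Str}(A')(k)$ by Corollary \ref{Aut-R-triv-S}; normality of $R\text{\bf Str}(A')(k)$ then yields $\psi^{3}\in R\text{\bf Str}(A')(k)$. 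The identical scheme applied to $U_{a^{-1}}\psi^{2}\in\text{Aut}(A'/M)$ and a subalgebra $S_{1}=\langle M,N_{1}\rangle$ gives $\psi^{2}\in R\text{\bf Str}(A')(k)$. Since $\gcd(2,3)=1$, I conclude $\psi=\psi^{3}(\psi^{2})^{-1}\in R\text{\bf Str}(A')(k)$.

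The main obstacle is exactly this non-Galois descent: constructing the crossed-product model $S=(B,\sigma)_+$ with its involution $\tau$ so that the cyclic generator $\rho$ over $\Delta$ descends to a $k$-rational automorphism $\phi$ of $S$ that stabilizes $S$ but genuinely moves $M$, and then verifying that the $k$-defined maps $\phi(\psi\phi)^{3}$ and $\phi U_{a^{-1}}\psi^{2}$ stabilize the requisite $9$-dimensional subalgebras, most cleanly seen after base change to $\Delta$ and recovered by $k$-rationality. By contrast, the cyclic case and all the homothety and $U$-operator reductions are routine given Theorem \ref{Str-A-L-Rtriv} and the cited results, so the descent is where the real effort goes.
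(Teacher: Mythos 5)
Your reductions and your treatment of the cyclic case are fine, but the non-Galois case --- which you correctly identify as carrying all the weight --- contains a genuine error, and it is exactly the error in an argument the author evidently tried and discarded (it survives verbatim in a commented-out block of the paper's source). The descent step fails: the $\Delta$-linear extension $\widetilde{\rho}$ of $\rho$ does \emph{not} commute with the semilinear involution $\tau$. On $M_{\Delta}$ the descent datum restricts to the generator $\tau_0$ of $Gal(M\Delta/M)$, and in $Gal(M\Delta/k)\cong S_3$ one has $\tau_0\rho\tau_0^{-1}=\rho^{-1}\neq\rho$; equivalently, in the crossed product $\widetilde{\rho}$ is induced by $\text{Int}(z)$ and $\tau(z)=z^{-1}$, so $\tau\widetilde{\rho}\tau^{-1}=\widetilde{\rho}^{-1}$. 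Hence $\widetilde{\rho}$ does not descend to a $k$-automorphism $\phi$ of $S$. Worse, the two properties you demand of $\phi$ contradict each other: if $\phi$ is defined over $k$ and $\phi_{\Delta}|_{M_{\Delta}}=\rho$, then $\phi(M)=\phi_{\Delta}(M\otimes 1)\subset M_{\Delta}\cap S=M$, so $\phi|_M\in\text{Aut}(M/k)=\{1\}$ because $M$ is non-Galois, forcing $\phi_{\Delta}|_{M_{\Delta}}=\text{id}\neq\rho$. So no $k$-rational automorphism can simultaneously ``act as $\rho$ after base change'' (which your computation $(\psi\phi)^3|_M=\text{id}$ requires) and move $M$. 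This is not a repairable slip: for non-Galois $M$ there is no automorphism over $k$ of anything containing $M$ that restricts to a nontrivial automorphism of $M$, so every attempt to mimic the cyclic-case untwisting by composing $\psi$ with a Galois-like automorphism is blocked at the root.

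The paper's actual proof avoids descent entirely. It chooses $v$ more carefully --- not merely so that $A^{(v)}$ is cyclic, but (via Corollaries \ref{aut-A-L-Rtriv} and \ref{aut-A-Rtriv}) so that $\text{Aut}(A^{(v)})\subset R\text{\bf Str}(A)(k)$ --- keeps the cyclic subfield $L$ in play, and for non-cyclic $M$ works with the $9$-dimensional subalgebra $S=\langle L,M\rangle$. Given $\psi\in\text{Str}(A_1,M)$ with $a:=\psi(1)$, one has $\psi|_M=R_a$ automatically, so $\phi:=\psi^2U_{a^{-1}}\in\text{Aut}(A_1)$; then Proposition \ref{action}, the Skolem--Noether-type extension theorem of \cite{P-S-T1} (extending an isomorphism $\psi([\psi(S)])\cong\psi(S)$ to an automorphism $\theta$ of the isotope $A_1^{(a^{-1})}$), and Lemma \ref{computational} (giving $\text{Aut}(A_1^{(a^{-1})})=\psi\text{Aut}(A_1)\psi^{-1}\subset R\text{\bf Str}(A)(k)$) combine to show $\psi^{-1}\theta\phi\in\text{Str}(A_1,S)\subset R\text{\bf Str}(A)(k)$, whence $\psi\in R\text{\bf Str}(A)(k)$. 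To salvage your write-up you would have to replace the entire descent paragraph by an argument of this kind; as written, the key object $\phi$ of your proof does not exist.
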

  \begin{proof} To begin with, since the structure group is unchanged by moving to an isotope, and taking istopes is transitive, we may replace $A$ by a suitable isotope and assume that $A$ contains a cyclic cubic subfield $L$ (see Proposition \ref{cyclicity}).

    By Theorem \ref{Str-A-L-Rtriv} we have $\text{Str}(A,L)\subset R\text{\bf Str}(A)(k)$. Also, by Corollary \ref{aut-A-L-Rtriv}, there exists $v\in L^{\times}$ such that
    $\text{Aut}(A^{(v)},L)\subset R\text{\bf Str}(A)(k)$. For this choice of $v\in L^{\times}$, by Corollary \ref{aut-A-Rtriv}, we have $\text{Aut}(A^{(v)})\subset R\text{\bf Str}(A)(k)$. 

    Let $M\subset A^{(v)}$ be any separable cubic subfield. Since $\text{Str}(A^{(v)},L)=\text{Str}(A,L)$, by the above, we may assume that $M\neq L$ and Theorem \ref{Str-A-L-Rtriv} allows us to assume  that $M/k$ is not cyclic.

    For simplicity of notation,  we denote $A_1=A^{(v)}$ and write $1$ for the identity element of $A_1$. Let $S:=<L,M>$ denote the subalgebra of $A_1$ generated by $L$ and $M$. Then $S$ is $9$-dimensional and contains $M$ as a subfield.

  Let $\psi\in\text{Str}(A_1,M)$. Then, by Proposition \ref{action} and notation introduced there, $[\psi(S)]$ is a subalgebra of $A_1$, where $[\psi(S)]=\psi(S)$ as a vector subspace of $A_1$, but the identity element of $[\psi(S)]$ equals that of $A_1$, which we denote by $1$.

  Now, $\psi\in\text{Str}(A_1)=\text{Str}(A)$, hence $\psi:A_1\rightarrow A_1^{(a^{-1})}$ is an isomorphism, with $a:=\psi(1)\in M\subset A_1$
  (see \S2 {\bf(c)}). Therefore $\psi: S\rightarrow\psi(S)$ is an isomorphism of $S$ onto the subalgebra $\psi(S)$ of $\psi(A_1)=A_1^{(a^{-1})}$ (see \S2 {\bf(c)}). Similarly $\psi: [\psi(S)]\rightarrow \psi([\psi(S)])$ is an isomorphism onto the subalgebra $\psi([\psi(S)])$ of $A_1^{(a^{-1})}$.

  Now, since $a\in M\subset S$, we have $U_a\in\text{Str}(A_1,S)$. Moreover, since $M$ is not cyclic, by (\cite{J0}, Thm.1), $\psi_|M=R_a$. Hence
    $\psi^2U_{a^{-1}}\in\text{Aut}(A_1)$. We have therefore, writing $\phi:=\psi^2U_{a^{-1}}$,
    $$\psi([\psi(S)])=\psi(\psi(U_{a^{-1}}(S))=\phi(S)\cong S.$$
    Hence $\psi([\psi(S)])$ is a subalgebra of $A_1^{(a^{-1})}$ isomorphic to $S$. On the other hand $S\cong\psi(S)\subset A_1^{(a^{-1})}$. Hence $\psi([\psi(S)])\cong \psi(S)$ as subalgebras of $A_1^{(a^{-1})}$. Let $\theta:\psi([\psi(S)])\rightarrow\psi(S)$ be an isomorphism. By (Thm. 3.1, \cite{P-S-T1}, Thm. 5.2, Remark 5.6, \cite{P}), we can extend $\theta$ to an element of $\text{Aut}(A_1^{(a^{-1})})$, also denoted by $\theta$. Hence we have 
    $$\theta(\psi([\psi(S)])=\theta\psi^2(S)=\theta\psi^2U_{a^{-1}}(S)=\theta\phi(S)=\psi(S).$$
    Now, by Lemma \ref{computational}, $\text{Aut}(A_1^{(a^{-1})})=\psi\text{Aut}(A_1)\psi^{-1}\subset\text{Str}(A)$ and $\text{Aut}(A_1)=\text{Aut}(A^{(v)})\subset R\text{\bf Str}(A)(k)$. Hence it follows that
    $\text{Aut}(A_1^{(a^{-1})})\subset R\text{\bf Str}(A)(k)$. Thus $\theta\in  R\text{\bf Str}(A)(k)$. We have $\phi\in\text{Aut}(A_1)\subset R\text{\bf Str}(A)(k)$ and, by (Lemma 4.1, \cite{Th-2}), $U$-operators (of isotopes) are in $R\text{\bf Str}(A)(k)$.

    Finally, from the above, we have $\psi^{-1}\theta\phi(S)=S$. Thus
    $$\psi^{-1}\theta\phi\in\text{Str}(A_1,S)\subset R\text{\bf Str}(A_1)(k)=R\text{\bf Str}(A)(k),$$
    where the inclusion follows from Proposition \ref{Str-R-triv}. Hence it follows that $\psi\in\text{\bf Str}(A)(k)$. 
    \end{proof}
    \noindent
    {\bf Remark:} The above proof gives a bit more. Let $A$ be an Albert division algebra over a field $k$ and $A_0$ be an isotope of $A$ that contains a cubic cyclic subfield $L$. Let $v\in L^{\times}$ be such that $\text{Aut}(A_0^{(v)},L)\subset R\text{\bf Str}(A)(k)$ and $\text{Aut}(A_0^{(v)})\subset R\text{\bf Str}(A)(k)$. Then for any separable cubic subfield $M\subset A_0^{(v)}$ we have
    $\text{Str}(A_0^{(v)},M)\subset R\text{\bf Str}(A)(k)$.

    \noindent
  We now prove our {\bf Main theorem}. 
  \begin{theorem}\label{main} Let $A$ be an Albert algebra over a field $k$ of arbitrary characteristic. Then $\text{\bf Str}(A)$ is $R$-trivial over $k$.
  \end{theorem}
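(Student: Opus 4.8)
The plan is to prove that $\text{\bf Str}(A)$ is $R$-trivial, which by definition requires showing $\text{\bf Str}(A)(L)/R = \{1\}$ for every field extension $L/k$. The first observation is that it suffices to prove $\text{Str}(A) = R\text{\bf Str}(A)(k)$ for $A$ an arbitrary Albert division algebra over an arbitrary base field $k$, because the hypotheses are stable under base change: if $A$ is a division algebra then the claim over $L$ follows by applying the division-algebra case to $A_L$ (when $A_L$ remains division) and, when $A_L$ is reduced, the group $\text{\bf Str}(A)_L$ is the structure group of a reduced Albert algebra, which has already been shown to be $R$-trivial in the prequel \cite{Th-2}. So I would open the proof by reducing to showing $\text{Str}(A) \subset R\text{\bf Str}(A)(k)$ for a single Albert \emph{division} algebra $A$.

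\noindent
Next I would invoke the cyclicity passage. By Proposition \ref{cyclicity} there is an invertible $v_0 \in A$ such that the isotope $A^{(v_0)}$ is cyclic, containing a cyclic cubic subfield $L$. Since the structure group and its $R$-subgroup are isotopy-invariant — $\text{Str}(A^{(v_0)}) = \text{Str}(A)$ and $R\text{\bf Str}(A^{(v_0)})(k) = R\text{\bf Str}(A)(k)$ — I may replace $A$ by $A^{(v_0)}$ and assume at the outset that $A$ itself contains a cyclic cubic subfield $L$. Now Theorem \ref{Str-A-M-Rtriv} produces an element $v \in A$ (in fact $v \in L^{\times}$) with the property that $\text{Str}(A^{(v)}, M) \subset R\text{\bf Str}(A)(k)$ for \emph{every} separable cubic subfield $M \subset A^{(v)}$. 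Because isotopy leaves the structure group untouched, after a further harmless replacement I regard this $v$-isotope as my working algebra and simply write $A$ for it.

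\noindent
The final and decisive step is to show that an \emph{arbitrary} $\psi \in \text{Str}(A)$ lands in $R\text{\bf Str}(A)(k)$. Given such a $\psi$, I would apply Proposition \ref{invariant-cubic}: there exist a cubic subfield $M \subset A$ and an element $\theta \in R\text{\bf Str}(A)(k)$ with $\theta\psi \in \text{Str}(A, M)$, i.e. $\theta\psi$ stabilizes $M$. Since $A$ is a division algebra, $M$ is automatically a separable cubic subfield (in characteristic arising as $k(a)$ for $a = \psi(1)$, and one checks separability, or passes to a separable subfield if needed). By the conclusion of Theorem \ref{Str-A-M-Rtriv} applied to this $M$, we get $\theta\psi \in \text{Str}(A, M) \subset R\text{\bf Str}(A)(k)$. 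As $R\text{\bf Str}(A)(k)$ is a subgroup containing $\theta$, it follows that $\psi = \theta^{-1}(\theta\psi) \in R\text{\bf Str}(A)(k)$, completing the proof that $\text{Str}(A) = R\text{\bf Str}(A)(k)$.

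\noindent
The main obstacle I anticipate is \emph{not} in this last assembly — that is a clean two-line combination of Proposition \ref{invariant-cubic} and Theorem \ref{Str-A-M-Rtriv} — but rather in correctly handling the base-change reduction that makes the statement one about $R$-\emph{triviality} (all extensions $L$) rather than merely $R$-\emph{trivial points over $k$}. The delicate point is that over a general $L$ the algebra $A_L$ may become reduced, so one cannot uniformly apply the division-algebra machinery; one must cite the reduced case from \cite{Th-2} separately and verify that the cyclicity input (Proposition \ref{cyclicity}) and the fixed-point/subfield arguments all survive the passage to $L$, in particular that the separability hypotheses on cubic subfields needed by Theorem \ref{Str-A-M-Rtriv} continue to hold. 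The purely characteristic-free nature of all the auxiliary lemmas is what ultimately removes this obstacle, but it is the step that deserves the most care.
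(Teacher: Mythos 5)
Your overall route coincides with the paper's: reduce to the division case (handling base change by quoting the reduced case from \cite{Th-2}), pass to an isotope containing a cyclic cubic subfield via Proposition \ref{cyclicity}, arrange Theorem \ref{Str-A-M-Rtriv}, and finish an arbitrary $\psi\in\text{Str}(A)$ by combining Proposition \ref{invariant-cubic} with Theorem \ref{Str-A-M-Rtriv}. However, there is a genuine gap at the decisive step: your claim that the cubic subfield $M$ furnished by Proposition \ref{invariant-cubic} is ``automatically separable'' is unjustified and false in general. When $k$ has characteristic $3$, an Albert division algebra may contain purely inseparable cubic subfields, and Proposition \ref{invariant-cubic} gives no control over the separability of $M$ (it arises as $\psi^{-1}(k(a))$ with $a=\psi(1)$, or as a fixed-point field of an automorphism, either of which can be inseparable). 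Your fallback of ``passing to a separable subfield if needed'' does not work either: Theorem \ref{Str-A-M-Rtriv} applies only to a field that $\theta\psi$ actually stabilizes, and nothing guarantees that $\theta\psi$ stabilizes any separable cubic subfield.

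The paper closes exactly this case with a short separate argument, and this is the only point where your proposal falls short of its proof. If $M/k$ is not separable, then necessarily $\mathrm{char}\,k=3$ and $M$ is purely inseparable, so $a:=\psi(1)\in M^{\times}$ satisfies $a^{3}=\alpha$ for some $\alpha\in k^{\times}$. Then $R_{\alpha^{-1}}U_{a}\psi$ fixes the identity (since $U_{a}\psi(1)=U_{a}(a)=a^{3}=\alpha$), hence lies in $\text{Aut}(A)$. To exploit this, the paper's setup arranges, after the isotope passage, not only that $\text{Str}(A,M)\subset R\text{\bf Str}(A)(k)$ for every \emph{separable} cubic subfield $M$, but also that $\text{Aut}(A)\subset R\text{\bf Str}(A)(k)$ (via Corollary \ref{aut-A-L-Rtriv} and Corollary \ref{aut-A-Rtriv}); since scalar homotheties and $U$-operators belong to $R\text{\bf Str}(A)(k)$ as well, one concludes $\psi\in R\text{\bf Str}(A)(k)$ in this case too. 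Your setup never arranges $\text{Aut}(A)\subset R\text{\bf Str}(A)(k)$, which is precisely the ingredient needed to repair the inseparable case; adding that arrangement and the characteristic-$3$ argument above completes your proof.
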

  \begin{proof} When $A$ is reduced, this is Theorem 7.3 of \cite{Th-2}. So we may assume $A$ is a division algebra. We first prove $\text{Str}(A)=R\text{\bf Str}(A)(k)$.

    By Proposition \ref{cyclicity}, Corollary \ref{aut-A-L-Rtriv} and Corollary \ref{aut-A-Rtriv} we may assume, by passing to a suitable isotope, that $A$ contains a cubic cyclic subfield $L$, $\text{Aut}(A)\subset R\text{\bf Str}(A)(k)$ and that $\text{Str}(A,M)\subset R\text{\bf Str}(A)(k)$ for any separable cubic subfield $M\subset A$ (see Theorem \ref{Str-A-M-Rtriv} and its proof, the Remark above).
    
    Let $\psi\in\text{Str}(A)$ be arbitrary. By Proposition \ref{invariant-cubic}, there exists $M\subset A$, a cubic subfield, and $\theta\in R\text{\bf Str}(A)(k)$ such that $\theta\psi\in\text{Str}(A,M)$. When $M$ is separable, Theorem \ref{Str-A-M-Rtriv} implies $\theta\psi\in R\text{\bf Str}(A)(k)$ and thus $\psi\in R\text{\bf Str}(A)(k)$.

    So assume now $M/k$ is not separable. Then necessarily the characteristic of $k$ is $3$. Hence for $a:=\psi(1)\in M^{\times}$ we have $a^3=\alpha$ for some $\alpha\in k^{\times}$ and $R_{{\alpha}^{-1}}U_a\psi\in\text{Aut}(A)\subset R\text{\bf Str}(A)(k)$. Since scalar homotheties and $U$-operators belong to $R\text{\bf Str}(A)(k)$, it follows that $\psi\in R\text{\bf Str}(A)(k)$.

    Now let $F/k$ be any field extension. If $A_F:=A\otimes_kF$ is reduced, we have, by Theorem 7.3 of \cite{Th-2}, $\text{Str}(A_F)=R\text{\bf Str}(A_F)(F)$. If $A_F$ is division algebra over $F$, the equality $\text{Str}(A_F)=R\text{\bf Str}(A_F)(F)$ follows from the above argument by replacing $k$ by $F$ and $A$ by $A_F$ . This completes the proof of the theorem.
\end{proof} 
\begin{corollary}\label{isom-R-triv} Let $A$ be an Albert division algebra over a field $k$. Then the algebraic group $\text{\bf Isom}(A)$, the full group of norm isometries of $A$, is $R$-trivial.
  \end{corollary}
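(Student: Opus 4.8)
The plan is to deduce the $R$-triviality of $\text{\bf Isom}(A)$ from the Main theorem by exploiting the multiplier character of the structure group rather than the bare inclusion $\text{\bf Isom}(A)\hookrightarrow\text{\bf Str}(A)$. It suffices to prove $\text{Isom}(A_L)\subseteq R\text{\bf Isom}(A)(L)$ for every field extension $L/k$, since $\text{\bf Isom}(A)_L=\text{\bf Isom}(A_L)$. One cannot argue from functoriality of the inclusion alone: that map only shows the \emph{image} of $\text{\bf Isom}(A)(L)/R$ in $\text{\bf Str}(A)(L)/R=\{1\}$ is trivial, whereas the kernel is exactly the issue. This is the same phenomenon by which $\text{SL}_1(D)(k)/R\cong SK_1(D)$ can be nonzero while $\text{GL}_1(D)$ is rational, so the position of $\text{\bf Isom}(A)$ as the derived group of $\text{\bf Str}(A)$ must be used, not just the triviality of $\text{\bf Str}(A)(L)/R$.

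Let $\nu:\text{\bf Str}(A)\to\mathbb{G}_m$ be the multiplier ($N(\psi(x))=\nu(\psi)N(x)$), a homomorphism whose kernel is $\text{\bf Isom}(A)$. The key device I would introduce is the morphism
$$\Phi:\text{\bf Str}(A)\longrightarrow\text{\bf Isom}(A),\qquad \Phi(\psi):=R_{\nu(\psi)^{-1}}\,U_{\psi(1)}\,\psi .$$
It is everywhere defined (as $N(\psi(1))=\nu(\psi)\neq0$, so $\psi(1)$ is invertible) and lands in $\text{\bf Isom}(A)$, because $\nu(\Phi(\psi))=\nu(\psi)^{-3}\,N(\psi(1))^{2}\,\nu(\psi)=\nu(\psi)^{-3}\nu(\psi)^{2}\nu(\psi)=1$; moreover $\Phi(1)=1$. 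Since a $k$-morphism sends $R$-equivalent points to $R$-equivalent points, $\Phi$ maps $R\text{\bf Str}(A)(L)$ into $R\text{\bf Isom}(A)(L)$. By the Main theorem every $g\in\text{Isom}(A_L)$ lies in $\text{Str}(A_L)=R\text{\bf Str}(A)(L)$, and since $\nu(g)=1$ we obtain $\Phi(g)=U_{g(1)}\,g\in R\text{\bf Isom}(A)(L)$. Writing $b:=g(1)$, which satisfies $N(b)=1$, the whole problem is thereby reduced to proving $U_b\in R\text{\bf Isom}(A)(L)$, for then $g=U_b^{-1}\,(U_b g)$ lies in the subgroup $R\text{\bf Isom}(A)(L)$.

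To handle $U_b$ I would use that $b$ lies in the commutative cubic subalgebra $L':=L[b]\subseteq A_L$, on which the Albert norm restricts to the generic norm, so that $N_{L'/L}(b)=N(b)=1$. When $L'/L$ is \'etale, the norm-one torus $T:=R^{(1)}_{L'/L}\mathbb{G}_m$ has dimension at most $2$ and hence is rational by Voskresenskii's theorem, so it is $R$-trivial; in particular $b$ and $1$ are $R$-equivalent in $T(L)$. The morphism $T\to\text{\bf Isom}(A)$, $x\mapsto U_x$, is well defined into $\text{\bf Isom}(A)$ (since $N(x)=1$ on $T$) and sends $1\mapsto\mathrm{id}$, so it transports this $R$-equivalence to $U_b\sim\mathrm{id}$ inside $\text{\bf Isom}(A)(L)$. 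This gives $U_b\in R\text{\bf Isom}(A)(L)$, completing the reduction; running the argument over all $L$ yields $\text{\bf Isom}(A)(L)/R=\{1\}$, i.e. $R$-triviality.

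The main obstacle is the persistent factor of $3$ arising from the central $\mu_3$, the intersection of $\text{\bf Isom}(A)$ with the one-dimensional central subgroup of scalar homotheties. Because the multipliers of scalar homotheties and of $U$-operators are cubes and squares respectively, the congruence $2x+3y=-1$ admits no solution with $x=0$; hence there is \emph{no} rational retraction $\text{\bf Str}(A)\dashrightarrow\text{\bf Isom}(A)$ restricting to the identity on $\text{\bf Isom}(A)$, and $\Phi$ unavoidably introduces the correction $U_{g(1)}$. Soft arguments therefore only yield $[U_b]^{3}=1$ in $\text{\bf Isom}(A)(L)/R$, and it is precisely the rationality of the low-dimensional norm-one torus that removes this residual $3$-torsion and kills $[U_b]$ itself. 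The one delicate point to dispatch is the non-\'etale case in characteristic $3$: if $L'/L$ is purely inseparable then $N_{L'/L}(b)=b^{3}$, so $N(b)=1$ forces $(b-1)^{3}=0$ and hence $b=1$ when $A_L$ is a division algebra, making that case vacuous; the reduced case (where $\text{\bf Isom}(A)$ is isotropic) is treated separately, e.g. via \cite{Th-2}.
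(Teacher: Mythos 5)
The paper itself gives no argument for this corollary: its ``proof'' is the single sentence that the argument is exactly the same as Theorem 5.5 of \cite{Th-2}. Relative to this paper, your proposal is therefore a genuinely self-contained route, and its core is correct. The map $\Phi(\psi)=R_{\nu(\psi)^{-1}}U_{\psi(1)}\psi$ is an everywhere-defined $k$-morphism of varieties $\text{\bf Str}(A)\to\text{\bf Isom}(A)$ (your multiplier computation $\nu(\psi)^{-3}\nu(\psi)^{2}\nu(\psi)=1$ is right), it sends $1$ to $1$, and morphisms carry $R$-equivalent points to $R$-equivalent points; combined with Theorem \ref{main} this correctly reduces the corollary to showing $U_b\in R\text{\bf Isom}(A)(L)$ for $N(b)=1$. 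You also correctly diagnose why bare functoriality of the inclusion $\text{\bf Isom}(A)\subset\text{\bf Str}(A)$ proves nothing (the $SK_1$ analogy is apt) --- this is exactly why the corollary is not an immediate formal consequence of the Main theorem. The transport of $R$-equivalence from the norm-one torus $R^{(1)}_{L'/L}\mathbb{G}_m$ --- rational because it has dimension at most $2$, hence $R$-trivial --- through the regular map $x\mapsto U_x$ is likewise correct, and deferring the case where $A_L$ is reduced to \cite{Th-2} matches the paper's own practice (e.g.\ in the proof of Theorem \ref{Weiss}). What your route buys is an explicit reduction of the corollary to two standard facts (rationality of tori of dimension $\le 2$, and regularity of $a\mapsto U_a$), with Theorem \ref{main} as the only deep input.

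There is one genuine, though easily repaired, gap: the case where $b:=g(1)$ is a scalar. You set $L':=L[b]$ and assert it is a cubic subalgebra on which the Albert norm restricts to the generic norm; but an isometry can have $g(1)=\omega\cdot 1$ with $\omega\in L$ a primitive cube root of unity (for instance $g=R_\omega$, whose multiplier is $\omega^3=1$). Then $L[b]=L$ is not cubic, the restriction of $N$ to $L\cdot 1$ is $\beta\mapsto\beta^{3}$ rather than the identity, your torus $T$ degenerates to the trivial group, and the argument as written never reaches $U_\omega=R_{\omega^{2}}$. The repair is one line: this case can only occur in characteristic different from $3$ (in characteristic $3$, $\omega^3=1$ forces $\omega=1$, as you note), so every cubic subfield of the division algebra $A_L$ is separable; pick any $x\notin L\cdot 1$, set $M:=L[x]$, observe $\omega\cdot 1\in M$ with $N_{M/L}(\omega)=\omega^{3}=1$, and run your torus argument for $R^{(1)}_{M/L}\mathbb{G}_m$. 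With that case added, the proof is complete.
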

  \begin{proof} The proof is exactly the same as (Thm. 5.5, \cite{Th-2}), so we omit it here.
    \end{proof}

\section{\bf $R$-triviality of $E^{78}_{8,2}$, Tits-Weiss conjecture} In this section, we prove the Kneser-Tits conjecture for algebraic groups over $k$ with Tits index $E^{78}_{8,2}$ as well as the Tits-Weiss conjecture for Albert division algebras. We recall a few facts below.

\vskip1mm
\noindent
Let $\text{\bf G}$ be a simple, simply connected algebraic group with Tits index $E_{8,2}^{78}$ over $k$. 
This index has associated Dynkin diagram 
\vskip10mm

\begin{center}
\setlength{\unitlength}{1.0cm}
\begin{picture}(10,-25)\thicklines

\put(0,0){\line(1,0){1}}
\put(1,0){\line(1,0){1}}
\put(2.10,0){\line(0,1){1}}
\put(2,0){\line(1,0){1}}
\put(3,0){\line(1,0){1}}
\put(4,0){\line(1,0){1}}
\put(-0.05,-0.10){$\bullet$}
\put(1.05,-0.10){$\bullet$}
\put(2.00,-0.10){$\bullet$}
\put(3.0,-0.10){$\bullet$}
\put(3.95,-0.10){$\bullet$}
\put(2.00,0.90){$\bullet$}
\put(4.90,-0.10){$\bullet$}
\put(4.92,0){\line(1,0){1}}
\put(4.8,-0.10){$\bigcirc$}
\put(5.85,-0.10){$\bullet$}
\put(5.74,-0.10){$\bigcirc$}
\end{picture}
\end{center}
The reductive anisotropic kernel of $\text{\bf G}$ is simple, simply connected and strongly inner of type $E_6$ (see \cite{TW} Appendix, \cite{T1}) and the semisimple anisotropic kernel of $\text{\bf G}$ is isomorphic to the group {\bf Isom}$(A)$ of norm isometries  of an Albert 
division algebra $A$ over $k$ (see \cite{T1}). Let $\Gamma$ be the Tits building associated to $\text{\bf G}$. Then $\Gamma$ is a Moufang hexagon associated to a hexagonal system of type $27/K$, where $K$ is either a quadratic field extension of $k$ or $K=k$, according as $A$ is represented as the second Tits construction or the first respectively. The group $\text{\bf G}(k)$ is then the group of ``linear'' automorphisms of $\Gamma$. 
Any group of type $E_{8,2}^{78}$ can be realized as the group of linear automorphisms of a Moufang hexagon as above (see \cite{TW}).

\vskip1mm
\noindent 
We continue to use notation set up in the earlier sections. For an Albert algebra $A$ over a field $k$, one knows that the structure group $\text{\bf Str}(A)$ is connected, reductive of type $E_6$ (see Thm. 7.3.2, \cite{SV}. Thm. 14.27, \cite{Spr-J}). The group $\text{\bf Isom}(A)$ of norm isometries of $A$, by (\cite{SV}, Thm. 7.3.2), is a connected, simple, simply connected algebraic group of type $E_6$ defined over $k$. Moreover, $\text{\bf Isom}(A)=[\text{\bf Str}(A),\text{\bf Str}(A)]$, the commutator subgroup of $\text{\bf Str}(A)$. Let $\text{Instr}(A)$
denote the subgroup of $\text{Str}(A)$ generated by the $U$-operators of $A$. Finally, let $C$ be the subgroup of all scalar homotheties $R_a,~a\in k^{\times}$ in $\text{Str}(A)$. We can now state  
\vskip1mm
\noindent
{\bf Tits-Weiss conjecture.} Let $A$ be an Albert division algebra over a field $k$, then $\text{Str}(A)=C.Instr(A)$. 
\vskip0.5mm
\noindent
This is equivalent to proving $\text{Str}(A)/(C.\text{Instr}(A))=\{1\}$. The Tits-Weiss conjecture is equivalent to the \emph{Kneser-Tits conjecture} for groups of type $E^{78}_{8,2}$ (see Appendix of \cite{ACP} for a proof). 

Let $A$ be an Albert division algebra over a field $k$. To (the isotopy class of) $A$, one can attach a simple, simply connected group $\text{\bf G}$ defined over $k$, having Tits index $E^{78}_{8,2}$. Let $\text{\bf G}(k)^{\dagger}$ denote the subgroup of $\text{\bf G}(k)$ generated by the $k$-rational points of the unipotent radicals of $k$-parabolic subgroups of $\text{\bf G}$. The Kneser-Tits conjecture says $\text{\bf G}(k)/\text{\bf G}(k)^{\dagger}=\{1\}$. 

Conversely, for a simple, simply connected algebraic group $\text{\bf G}$ defined over $k$ having Tits index $E^{78}_{8,2}$, the (reductive) anisotropic kernel is the structure group $\text{\bf Str}(A)$ of an Albert division algebra $A$ defined over $k$, whose isotopy class is determined by the isomorphism class of $\text{\bf G}$ (see \cite{T1}, 3.3.1). In (\cite{ACP}, Appendix) it is shown that 
$\text{\bf G}(k)/\text{\bf G}(k)^{\dagger}\cong\text{Str}(A)/(C.\text{Instr}(A))$. Hence, to prove the Tits-Weiss conjecture,
it suffices to prove the Kneser-Tits conjecture for $\text{\bf G}$. By (Thm. 7.2, \cite{G}), this is equivalent to proving $\text{\bf G}$ is $R$-trivial. Hence it suffices to prove
$$W(k,\text{\bf G})=\text{\bf G}(k)/\text{\bf G}(k)^{\dagger}\cong \text{\bf G}(k)/R=\{1\}.$$

Fix a maximal $k$-split torus $\text{\bf S}\subset\text{\bf G}$. It is shown in (\cite{Th-2}, 6.1) that 
$\text{\bf G}(k)/R\cong Z_{\text{\bf G}}(\text{\bf S})(k)/R\cong (Z_{\text{\bf G}}(\text{\bf S})/\text{\bf S})(k)/R$ (see \cite{G2}, 1.2).

These isomorphisms are compatible with base change by any field extension $L/k$. 
To prove the Kneser-Tits conjecture for $\text{\bf G}$, it therefore suffices to prove $(Z_{\text{\bf G}}(\text{\bf S})/\text{\bf S})(k)/R=\{1\}$.

Let $\text{\bf H}:=Z_{\text{\bf G}}(\text{\bf S})$ and $\text{\bf H}':=[\text{\bf H}, \text{\bf H}]$ be the commutator subgroup of $\text{\bf H}$. Then $\text{\bf H}'$ is simple, simply connected, $k$-anisotropic and strongly inner of type $E_6$. Since $\text{\bf H}$ is the reductive anisotropic kernel of {\bf G}, by (\cite{T1}, 3.3), we have $\text{\bf H}'=\text{\bf Isom}(A)$. 
\begin{lemma}[Lemma 6.1, \cite{Th-2}] The connected center of {\bf H} equals $S$.
\end{lemma}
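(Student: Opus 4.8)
The plan is to identify the connected center $Z(\text{\bf H})^{\circ}$ with the radical (central torus) of the reductive group $\text{\bf H}=Z_{\text{\bf G}}(\text{\bf S})$, to establish the containment $\text{\bf S}\subseteq Z(\text{\bf H})^{\circ}$, and then to force equality by a dimension count that exploits the fact that $[\text{\bf H},\text{\bf H}]=\text{\bf Isom}(A)$ is of type $E_6$.

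First I would recall that $\text{\bf H}=Z_{\text{\bf G}}(\text{\bf S})$, being the reductive anisotropic kernel of $\text{\bf G}$ (the Levi factor of a minimal parabolic $k$-subgroup), is connected and reductive. Consequently its connected center $Z(\text{\bf H})^{\circ}$ is the radical $R(\text{\bf H})$, a central torus, and one has the almost direct product decomposition $\text{\bf H}=Z(\text{\bf H})^{\circ}\cdot[\text{\bf H},\text{\bf H}]$ with finite intersection. Next I would verify the containment $\text{\bf S}\subseteq Z(\text{\bf H})^{\circ}$: since $\text{\bf S}$ is a torus it is abelian, hence $\text{\bf S}\subseteq Z_{\text{\bf G}}(\text{\bf S})=\text{\bf H}$; and by the very definition of the centralizer every element of $\text{\bf H}$ commutes with $\text{\bf S}$, so $\text{\bf S}\subseteq Z(\text{\bf H})$; as $\text{\bf S}$ is connected this yields $\text{\bf S}\subseteq Z(\text{\bf H})^{\circ}$.

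The heart of the argument is the dimension count. Choosing a maximal torus $\text{\bf T}$ of $\text{\bf G}$ containing $\text{\bf S}$, one has $\text{\bf T}\subseteq Z_{\text{\bf G}}(\text{\bf S})=\text{\bf H}$, so $\text{\bf T}$ is a maximal torus of $\text{\bf H}$ and $\text{rank}(\text{\bf H})=\text{rank}(\text{\bf G})=8$, the absolute type of $\text{\bf G}$ being $E_8$. On the other hand $[\text{\bf H},\text{\bf H}]=\text{\bf H}'=\text{\bf Isom}(A)$ is simple of type $E_6$, so $\text{rank}([\text{\bf H},\text{\bf H}])=6$. Intersecting the decomposition $\text{\bf H}=Z(\text{\bf H})^{\circ}\cdot[\text{\bf H},\text{\bf H}]$ with $\text{\bf T}$ (noting that $\text{\bf T}\cap[\text{\bf H},\text{\bf H}]$ is a maximal torus of $[\text{\bf H},\text{\bf H}]$ and that $\text{\bf T}=Z(\text{\bf H})^{\circ}\cdot(\text{\bf T}\cap[\text{\bf H},\text{\bf H}])$ with finite intersection) gives $\text{rank}(\text{\bf H})=\dim Z(\text{\bf H})^{\circ}+\text{rank}([\text{\bf H},\text{\bf H}])$, whence $\dim Z(\text{\bf H})^{\circ}=8-6=2$. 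Since $\text{\bf S}$ is a maximal $k$-split torus of the group $\text{\bf G}$ of index $E_{8,2}^{78}$, its dimension equals the $k$-rank $2$.

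Finally, $\text{\bf S}$ and $Z(\text{\bf H})^{\circ}$ are both tori of dimension $2$ with $\text{\bf S}\subseteq Z(\text{\bf H})^{\circ}$; a subtorus of a torus of equal dimension is the whole torus, so $\text{\bf S}=Z(\text{\bf H})^{\circ}$, as claimed. I do not expect a serious obstacle here: the argument is pure structure theory of reductive groups, and the only points demanding care are (i) that $\text{\bf H}$ is genuinely reductive, so that $Z(\text{\bf H})^{\circ}$ is a torus rather than merely a commutative group, and (ii) the bookkeeping of absolute ranks ($8$ for $E_8$, $6$ for the anisotropic kernel of type $E_6$), which is exactly what makes the two dimensions coincide and rules out $Z(\text{\bf H})^{\circ}$ acquiring an anisotropic part strictly larger than $\text{\bf S}$.
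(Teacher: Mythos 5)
Your proof is correct. Note that the paper itself offers no argument for this statement---it simply imports Lemma 6.1 from \cite{Th-2}---so the right comparison is with that cited lemma, whose content your argument reconstructs in the standard way: $\text{\bf S}\subseteq Z(\text{\bf H})^{\circ}$ because $\text{\bf H}=Z_{\text{\bf G}}(\text{\bf S})$ centralizes $\text{\bf S}$, and then the rank bookkeeping (absolute rank $8$ for $E_8$, rank $6$ for the semisimple anisotropic kernel $\text{\bf Isom}(A)$ of type $E_6$, $k$-rank $2$ from the index $E^{78}_{8,2}$) forces $\dim Z(\text{\bf H})^{\circ}=2=\dim\text{\bf S}$, hence equality. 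This is exactly the structure-theoretic route one expects, and each step (connectedness and reductivity of the torus centralizer, the almost direct product $\text{\bf H}=Z(\text{\bf H})^{\circ}\cdot[\text{\bf H},\text{\bf H}]$, additivity of ranks across that decomposition) is sound; your proof in effect supplies the missing justification that the paper delegates to the citation.
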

\noindent
From this it follows that 
$$(Z_{\text{\bf G}}(\text{\bf S})/\text{\bf S})(k)/R=(\text{\bf H}'\text{\bf S}/\text{\bf S})(k)/R\cong(\text{\bf H}'/\text{\bf H}'\cap
\text{\bf S})(k)/R\cong(\text{\bf Str}(A)/\text{\bf Z})(k),$$
where $\text{\bf Z}$ is the center of $\text{\bf Str}(A)$ (see \cite{Th-2}, 6.1). Recall that $\text{\bf Z}\cong\mathbb{G}_m$ over $k$ (see \cite{Th-2}, Lemma 6.2). 
Hence, by Hilbert Theorem-$90$, $H^1(k,\text{\bf Z})=\{1\}$. It follows that $(\text{\bf Str}(A)/\text{\bf Z})(k)\cong\text{\bf Str}(A)(k)/\text{\bf Z}(k)$ .

\noindent
We now prove
\begin{theorem}\label{Weiss} Let $\text{\bf G}$ be a simple, simply connected algebraic group over $k$ with Tits index $E^{78}_{8,2}$. Then {\bf G} is $R$-trivial.   
\end{theorem}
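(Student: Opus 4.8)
The plan is to deduce the theorem from the Main theorem (Theorem \ref{main}) together with the chain of reductions already assembled in this section, the only genuinely new ingredient being the descent of $R$-triviality through the central quotient by $\text{\bf Z}$. Recall that those reductions identify, compatibly with base change by any field extension $L/k$,
$$\text{\bf G}(L)/R\cong (Z_{\text{\bf G}}(\text{\bf S})/\text{\bf S})(L)/R\cong(\text{\bf Str}(A)/\text{\bf Z})(L)/R.$$
Thus it suffices to prove $(\text{\bf Str}(A)/\text{\bf Z})(L)/R=\{1\}$ for every field extension $L$ of $k$, and the rest of the argument is group-theoretic.

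First I would record the surjectivity of rational points under the quotient map. Since $\text{\bf Z}\cong\mathbb{G}_m$, Hilbert's Theorem 90 gives $H^1(L,\text{\bf Z})=\{1\}$ for every $L$, so the cohomology sequence attached to $\{1\}\to\text{\bf Z}\to\text{\bf Str}(A)\xrightarrow{\pi}\text{\bf Str}(A)/\text{\bf Z}\to\{1\}$ shows that $\pi$ is surjective on $L$-points, that is, $\text{\bf Str}(A)(L)\twoheadrightarrow(\text{\bf Str}(A)/\text{\bf Z})(L)$. This is precisely the fact $(\text{\bf Str}(A)/\text{\bf Z})(L)\cong\text{\bf Str}(A)(L)/\text{\bf Z}(L)$ recalled just before the statement, now read over an arbitrary $L$.

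Next I would invoke the functoriality of $R$-equivalence. Since $\pi$ is a morphism of $k$-varieties, composing the connecting rational maps $f_i:\mathbb{A}^1_L\to\text{\bf Str}(A)$ of the definition with $\pi$ produces rational maps $\pi\circ f_i$ that remain regular at $0$ and $1$ with the required values; hence $\pi$ carries $R$-equivalent points to $R$-equivalent points and induces a homomorphism $\text{\bf Str}(A)(L)/R\to(\text{\bf Str}(A)/\text{\bf Z})(L)/R$, which is surjective by the previous step. By the Main theorem, $\text{\bf Str}(A)$ is $R$-trivial, so $\text{\bf Str}(A)(L)/R=\{1\}$; surjectivity then forces $(\text{\bf Str}(A)/\text{\bf Z})(L)/R=\{1\}$. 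Substituting back into the displayed isomorphism gives $\text{\bf G}(L)/R=\{1\}$ for all $L$, i.e. $\text{\bf G}$ is $R$-trivial.

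The substance of the argument lies entirely upstream, in the Main theorem and in the reductions of this section producing the isomorphism $\text{\bf G}(L)/R\cong(\text{\bf Str}(A)/\text{\bf Z})(L)/R$. For the present statement the only point requiring care is uniformity over all extensions $L$: one must check that the surjectivity of $\pi$ (via Hilbert 90) and the structural reductions hold after arbitrary base change, so that the pointwise triviality of $R$-classes genuinely upgrades to $R$-triviality of $\text{\bf G}$ as a $k$-group. I expect this base-change bookkeeping, rather than any deep geometric input, to be the main thing to verify here.
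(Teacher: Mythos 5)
Your proof is correct, but it handles the crucial point --- uniformity over all extensions $L/k$ --- by a genuinely different route than the paper, and the difference is worth spelling out. The paper does not argue uniformly: its proof splits into two cases according to whether $A\otimes_kL$ remains a division algebra. When $A_L$ is division, the Tits index of $\text{\bf G}_L$ is unchanged, the anisotropic kernel over $L$ is again $\text{\bf Str}(A_L)$, so the identification $\text{\bf G}(L)/R\cong(\text{\bf Str}(A)(L)/\text{\bf Z}(L))/R$ is re-derived over $L$ and Theorem \ref{main} finishes; when $A_L$ is reduced, the index degenerates ($\text{\bf S}_L$ is no longer a maximal $L$-split torus, so the ``anisotropic kernel'' derivation breaks down) and the paper instead quotes Thm.~7.3 and Thm.~7.5 of \cite{Th-2}. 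You avoid this case distinction by taking the section's assertion of base-change compatibility of $\text{\bf G}(L)/R\cong(Z_{\text{\bf G}}(\text{\bf S})/\text{\bf S})(L)/R\cong(\text{\bf Str}(A)/\text{\bf Z})(L)/R$ at full strength and then descending $R$-triviality through the central quotient via Hilbert 90 and functoriality of $R$-equivalence; that descent step is carried out correctly and is exactly what the paper's equality $(\text{\bf Str}(A)(L)/\text{\bf Z}(L))/R=\{1\}$ silently uses. The one point your route leans on harder than the paper does is that the base-change claim must hold even over those $L$ where $A_L$ is reduced. It does hold in the direction you need: the big cell $\text{\bf U}^-\times Z_{\text{\bf G}}(\text{\bf S})\times\text{\bf U}$ is defined over $k$ and survives base change, so one always has a surjection $Z_{\text{\bf G}}(\text{\bf S})(L)/R\twoheadrightarrow\text{\bf G}(L)/R$ for every (necessarily infinite) $L$, whether or not $\text{\bf S}_L$ stays maximal split, and triviality upstream then kills $\text{\bf G}(L)/R$. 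So your argument is sound; it buys uniformity and dispenses with the external appeal to Thm.~7.5 of \cite{Th-2}, while the paper's case distinction, in exchange, never has to justify the reduction over extensions where the index changes. Note finally that the reduced case has not vanished from your argument --- it is absorbed into Theorem \ref{main}, whose proof invokes Thm.~7.3 of \cite{Th-2} precisely when the Albert algebra becomes reduced.
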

\begin{proof} Let {\bf H} be the reductive anisotropic kernel of $\text{\bf G}$. Then $[\text{\bf H},\text{\bf H}]={\bf Isom}(A)$ for an Albert division algebra $A$ over $k$. We have, by the above discussion, 
  $$\text{\bf G}(k)/R\cong(\text{\bf Str}(A)/\text{\bf Z})(k)/R\cong(\text{\bf Str}(A)(k)/\text{\bf Z}(k))/R.$$
 
\noindent
Let $L$ be any field extension of $k$. If $A\otimes_k L$ is division,
then the Tits index of $\text{\bf G}$ over $L$ remains the same and the anisotropic kernel of $\text{\bf G}$ over $L$ corresponds to {\bf Str}$(A\otimes_kL)$. Hence 
$$W(L,\text{\bf G})= \text{\bf G}(L)/R=(\text{\bf Str}(A)(L)/\text{\bf Z}(L))/R=\{1\},$$
the last equality follows as $\text{\bf Str}(A)$ is $R$-trivial (see Theorem \ref{main}).
When $A\otimes_k L$ is reduced, the result follows from (Thm. 7.3 and Thm. 7.5, \cite{Th-2}). This proves the theorem.
\end{proof}
\begin{corollary}\label{KnTits-1} Let $\text{\bf G}$ be a simple, simply connected algebraic group over $k$ with Tits index $E^{78}_{8,2}$. The the Kneser-Tits conjecture holds for $\text{\bf G}$. 
\end{corollary}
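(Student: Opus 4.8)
The plan is to deduce the corollary formally from Theorem~\ref{Weiss} together with Gille's criterion (\cite{G}, Thm.~7.2), which was recalled in \S2~(\textbf{e}). The only point that requires checking is that $\text{\bf G}$ meets the hypotheses of that criterion; once this is in place, the Kneser--Tits conjecture for $\text{\bf G}$ becomes equivalent to the $R$-triviality we have just established, so the corollary is immediate.

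First I would verify the hypotheses. By assumption $\text{\bf G}$ is simple and simply connected, and its Tits index $E^{78}_{8,2}$ has absolute type $E_8$, so $\text{\bf G}$ is absolutely almost simple. The relative rank recorded by this index is $2$ (the two circled nodes of the displayed diagram), so $\text{\bf G}$ is isotropic over $k$. Hence $\text{\bf G}$ is a semisimple, simply connected, absolutely almost simple group, defined and isotropic over $k$, and therefore (\cite{G}, Thm.~7.2) applies and yields a natural isomorphism
$$W(k,\text{\bf G}) = \text{\bf G}(k)/\text{\bf G}(k)^{\dagger} \cong \text{\bf G}(k)/R.$$

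It then remains to combine this isomorphism with Theorem~\ref{Weiss}. That theorem asserts that $\text{\bf G}$ is $R$-trivial, i.e. $\text{\bf G}(L)/R = \{1\}$ for every field extension $L/k$; taking $L = k$ gives $\text{\bf G}(k)/R = \{1\}$. Substituting this into the displayed isomorphism yields $W(k,\text{\bf G}) = \{1\}$, which is precisely the statement of the Kneser--Tits conjecture for $\text{\bf G}$. I do not expect any genuine obstacle at this stage: all of the real difficulty has already been absorbed into Theorem~\ref{Weiss}, and further upstream into the $R$-triviality of $\text{\bf Str}(A)$ (Theorem~\ref{main}). The present corollary is a purely formal consequence, relying only on Gille's equivalence between the Whitehead group and the group of $R$-equivalence classes.
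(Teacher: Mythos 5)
Your proposal is correct and follows exactly the same route as the paper: combine the $R$-triviality of $\text{\bf G}$ from Theorem \ref{Weiss} with Gille's isomorphism $\text{\bf G}(k)/\text{\bf G}(k)^{\dagger}\cong\text{\bf G}(k)/R$ (\cite{G}, Thm.\ 7.2) to conclude that the Whitehead group is trivial. Your explicit verification of the hypotheses of Gille's theorem (absolute simplicity and isotropy, read off from the Tits index) is a detail the paper leaves implicit, but it is the same argument.
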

\begin{proof} Let $\text{\bf G}$ be a simple, simply connected algebraic group over $k$ with Tits index $E^{78}_{8,2}$. By the above $R$-triviality result we have $\text{\bf G}(k)/R=\{1\}$ and by (Thm. 7.2 of \cite{G}) we have $\text{\bf G}(k)/\text{\bf G}(k)^{\dagger}\cong\text{\bf G}(k)/R$. By $R$-triviality of $\text{\bf G}$ proved above, the latter quotient is trivial. Hence the result follows.
\end{proof}

\vskip1mm
\noindent
    {\bf Remark.} Note that we have proved above also that $\text{\bf Str}(A)/\text{\bf Z}$, the adjoint form of $E_6$, corresponding to an Albert division algebra $A$ over a field $k$ is $R$-trivial. 
\begin{corollary}{\bf (Tits-Weiss Conjecture)}\label{TW} Let $A$ be an Albert division algebra over a field $k$. Then $\text{Str}(A)=C.\text{Instr}(A)$.
\end{corollary}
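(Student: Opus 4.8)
The plan is to read the statement off from the Kneser-Tits conjecture for groups of type $E^{78}_{8,2}$, already established in Corollary \ref{KnTits-1}, using the dictionary that relates the Whitehead group of such a group to the Tits-Weiss quotient of the structure group.

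First I would attach to the isotopy class of the Albert division algebra $A$ a simple, simply connected algebraic group $\text{\bf G}$ over $k$ with Tits index $E^{78}_{8,2}$ whose reductive anisotropic kernel is the structure group $\text{\bf Str}(A)$; this is exactly the correspondence recalled just before Theorem \ref{Weiss} (see \cite{T1}, 3.3.1). The decisive input is then the identification
$$\text{\bf G}(k)/\text{\bf G}(k)^{\dagger}\cong\text{Str}(A)/(C.\text{Instr}(A)),$$
proved in the Appendix of \cite{ACP}, which realizes the Tits-Weiss quotient as the Whitehead group $W(k,\text{\bf G})$ of precisely this $\text{\bf G}$.

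With this in hand the argument closes immediately: Corollary \ref{KnTits-1} gives $\text{\bf G}(k)/\text{\bf G}(k)^{\dagger}=\{1\}$, so the displayed isomorphism forces $\text{Str}(A)/(C.\text{Instr}(A))=\{1\}$, equivalently $\text{Str}(A)=C.\text{Instr}(A)$, which is the assertion.

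I expect no genuine obstacle at this final stage, because all the substance has been consumed upstream: the hard step is the $R$-triviality of $\text{\bf Str}(A)$ (Theorem \ref{main}), which through the reduction preceding Theorem \ref{Weiss} yields the $R$-triviality of $\text{\bf G}$ (Theorem \ref{Weiss}), and then Gille's theorem (\cite{G}, Thm. 7.2) converts $R$-triviality into the vanishing of $\text{\bf G}(k)/\text{\bf G}(k)^{\dagger}$ underlying Corollary \ref{KnTits-1}. The only points demanding care are bookkeeping ones: that the group $\text{\bf G}$ attached to the isotopy class of $A$ really has $\text{\bf Str}(A)$ as its anisotropic kernel, and that the \cite{ACP} identification is invoked for this same $\text{\bf G}$, so that the two sides of the isomorphism genuinely encode the data of $A$.
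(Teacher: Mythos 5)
Your proposal is correct and matches the paper's own proof essentially verbatim: both attach to $A$ the group $\text{\bf G}$ of type $E^{78}_{8,2}$, invoke the identification $\text{\bf G}(k)/\text{\bf G}(k)^{\dagger}\cong\text{Str}(A)/(C.\text{Instr}(A))$ from the Appendix of \cite{ACP}, and conclude by the vanishing of the Whitehead group established in Corollary \ref{KnTits-1}. No gaps; the bookkeeping caveats you flag (that the same $\text{\bf G}$ appears on both sides) are exactly the points the paper also takes for granted via \cite{T1}, 3.3.1.
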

\begin{proof} This is immediate from the above and the equivalence of the Tits-Weiss conjecture and the Kneser-Tits conjecture for algebraic groups with Tits index $E_{8,2}^{78}$ proved in (Appendix, \cite{ACP}). Given an Albert division algebra $A$ over $k$, let $\text{\bf G}$ be the group of type $E_{8,2}^{78}$ associated to $A$. The group of $k$-points $\text{\bf G}(k)$ occurs as the group of ``linear'' automorphisms of the Moufang hexagon constructed from $A$. In (Appendix, \cite{ACP}), it is shown that the quotient 
$\text{Str}(A)/(C.\text{Instr}(A))$ is isomorphic to the quotient $\text{\bf G}(k)/\text{\bf G}(k)^{\dagger}$. By the above, this quotient is trivial. Hence we have proved the Tits-Weiss conjecture. 
\end{proof}
\section{\bf $R$-triviality of $E^{78}_{7,1}$}
We now prove $R$-triviality for simple groups of type $E^{78}_{7,1}$. These groups also have anisotropic kernels 
as the structure groups of Albert division algebras and all groups of this type arise this way (see \cite{T1}, 3.3.1). 

\vskip1mm
\noindent
Let ${\bf \Xi}$ be a simple, simply connected algebraic group over $k$ with Tits index $E_{7,1}^{78}$ over $k$. 
Then the semisimple anisotropic kernel of ${\bf \Xi}$ is a simple, simply connected strongly inner group of type $E_6$ and is isomorphic to {\bf Isom}$(A)$ for an Albert division algebra $A$ over $k$ (see \cite{T1}). The Tits index of ${\bf \Xi}$ has the diagram: 

\vskip10mm

\begin{center}
\setlength{\unitlength}{1.0cm}
\begin{picture}(10,-25)\thicklines

\put(0,0){\line(1,0){1}}
\put(1,0){\line(1,0){1}}
\put(2.10,0){\line(0,1){1}}
\put(2,0){\line(1,0){1}}
\put(3,0){\line(1,0){1}}
\put(4,0){\line(1,0){1}}
\put(-0.05,-0.09){$\bullet$}
\put(1.05,-0.09){$\bullet$}
\put(2.00,-0.09){$\bullet$}
\put(3.0,-0.09){$\bullet$}
\put(3.95,-0.09){$\bullet$}
\put(2.00,0.90){$\bullet$}
\put(4.90,-0.10){$\bullet$}
\put(4.80,-0.10){$\bigcirc$}

\end{picture}
\end{center}

We can realize ${\bf \Xi}$ as follows (see \cite{Ko}): let $A$ be an Albert division algebra over $k$. Let ${\bf \Xi}(A)$ be the group of birational transformations of $A$ 
generated by the structure group ${\bf Str}(A)$ and the maps 
$$x\mapsto t_a(x)=x+a~(~a\in~A_{\overline{k}}~),~x\mapsto j(x)=-x^{-1}~(x\in~A_{\overline{k}}~).$$
Then ${\bf \Xi}(A)$ has type $E_{7,1}^{78}$ and any algebraic group over $k$ with this Tits index arises this way (see \cite{T1}). Any element of $f\in{\bf \Xi}(A)$ is of the form 
$$f=w\circ t_a\circ j\circ t_b\circ j\circ t_c,~~w\in\text{\bf Str}(A),~a,b,c\in~A_{\overline{k}}.$$ 
This was proved by Koecher when $k$ is infinite and has characteristic $\neq 2$ and by O. Loos in arbitrary characteristics (\cite{L}), in the context of Jordan pairs.

The groups ${\bf \Xi}$ with this index have anisotropic kernel the structure group of an Albert division algebra. We use the explicit description of ${\bf \Xi}$ as above. 

Let $A$ and ${\bf \Xi}(A)$ be as above. Clearly $\text{\bf Str}(A)\subset {\bf \Xi}(A)$. Let {\bf Z} be the center of $\text{\bf Str}(A)$, then $\text{\bf Z}\cong\mathbb{G}_m$ over $k$. 
Fix $\text{\bf S}\subset {\bf \Xi}(A)$, a maximal $k$-split torus containing {\bf Z}. Then $\text{\bf H}:=Z_{{\bf \Xi}}(\text{\bf S})$ is the reductive anisotropic kernel of ${\bf \Xi}={\bf \Xi}(A)$.
We have $\text{\bf S}=\text{\bf Z}$, since $k$-rank of ${\bf \Xi}$ is $1$. Therefore $Z(\text{\bf H})^{\circ}=\text{\bf Z}$ and it follows (see \cite{Th-2}, \S7) that 
$${\bf\Xi}(k)/R\cong Z_{{\bf \Xi}}(\text{\bf S})(k)/R\cong (Z_{{\bf \Xi}}(\text{\bf S})/\text{\bf S})(k)/R.$$
\noindent
Also, 
$$\text{\bf H}=Z_{{\bf \Xi}}(\text{\bf S})=\text{\bf H}'.Z(\text{\bf H})^{\circ}=\text{\bf Isom}(A).\text{\bf Z}=\text{\bf Str}(A).$$
\noindent
Hence 
$$Z_{{\bf \Xi}}(\text{\bf S})/\text{\bf S}=\text{\bf Str}(A)/\text{\bf Z}.$$
We have proved that $(\text{\bf Str}(A)/\text{\bf Z})(k)/R=\{1\}$ (see Remark after the proof of Theorem \ref{Weiss}). Hence we have, by arguments quite similar to the proof of $R$-triviality of groups with index $E^{78}_{8,2}$, the following $R$-triviality result:
\begin{theorem}\label{Koecher} Let ${\bf \Xi}$ be a simple, simply connected algebraic group over $k$ with Tits index $E^{78}_{7,1}$. Then ${\bf \Xi}$ is $R$-trivial. 
\end{theorem}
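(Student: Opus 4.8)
The plan is to reduce the $R$-triviality of $\mathbf{\Xi}$ to the $R$-triviality of the reductive anisotropic kernel, exactly as in the proof of Theorem \ref{Weiss}, and then invoke the fact already established that $(\mathbf{Str}(A)/\mathbf{Z})(k)/R=\{1\}$. First I would note that by the discussion immediately preceding the statement, for the fixed maximal $k$-split torus $\mathbf{S}\subset\mathbf{\Xi}$ we have the chain of isomorphisms
$$\mathbf{\Xi}(k)/R\cong Z_{\mathbf{\Xi}}(\mathbf{S})(k)/R\cong (Z_{\mathbf{\Xi}}(\mathbf{S})/\mathbf{S})(k)/R,$$
together with the identifications $\mathbf{S}=\mathbf{Z}$, $\mathbf{H}=Z_{\mathbf{\Xi}}(\mathbf{S})=\mathbf{Str}(A)$, and hence $Z_{\mathbf{\Xi}}(\mathbf{S})/\mathbf{S}=\mathbf{Str}(A)/\mathbf{Z}$. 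These identifications follow from the structure of the Tits index $E_{7,1}^{78}$: the $k$-rank is $1$, so the split torus $\mathbf{S}$ collapses onto the connected center $\mathbf{Z}\cong\mathbb{G}_m$ of $\mathbf{Str}(A)$, and the anisotropic kernel is the full structure group.

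Next I would observe that these identifications are compatible with arbitrary base change $L/k$, which is the point that makes $R$-\emph{triviality} (as opposed to the mere vanishing of $\mathbf{\Xi}(k)/R$) accessible. For a field extension $L/k$ there are two cases. If $A\otimes_k L$ remains a division algebra, then the Tits index of $\mathbf{\Xi}$ over $L$ is unchanged, its anisotropic kernel over $L$ is $\mathbf{Str}(A\otimes_k L)$, and the same chain of isomorphisms gives
$$\mathbf{\Xi}(L)/R\cong(\mathbf{Str}(A)/\mathbf{Z})(L)/R=\{1\},$$
where the final equality is the Remark after Theorem \ref{Weiss} (equivalently, $R$-triviality of $\mathbf{Str}(A)$ from Theorem \ref{main} passed to the adjoint quotient, using $H^1(L,\mathbf{Z})=\{1\}$ by Hilbert 90). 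If instead $A\otimes_k L$ becomes reduced, then $\mathbf{\Xi}$ over $L$ is handled by the reduced-case results of \cite{Th-2} (Thm. 7.3 and Thm. 7.5), exactly as in the closing line of the proof of Theorem \ref{Weiss}.

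The main technical point to verify carefully is that the two maps $t_a$ and $j$ generating $\mathbf{\Xi}(A)$ on top of $\mathbf{Str}(A)$ do not enlarge the anisotropic kernel: one must confirm that $Z_{\mathbf{\Xi}}(\mathbf{S})$ is genuinely $\mathbf{Str}(A)$ and not something larger, i.e.\ that the translations and the inversion $j$ contribute only to the unipotent radical of the rank-$1$ parabolic and to the split torus direction. This is precisely where the explicit Koecher--Loos normal form $f=w\circ t_a\circ j\circ t_b\circ j\circ t_c$ is used, and it is the analogue of the Bruhat decomposition input invoked in the $E_{8,2}^{78}$ case. Once this identification of the anisotropic kernel is in hand, everything else is the formal transport of the already-proved $R$-triviality of $\mathbf{Str}(A)/\mathbf{Z}$ through the base-change-compatible isomorphisms, so I do not anticipate any further obstacle; the statement then follows by the same argument as Theorem \ref{Weiss}, which is why the author writes that the proof is ``quite similar.''
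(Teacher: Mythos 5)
Your proposal follows the paper's own argument essentially step for step: identify the reductive anisotropic kernel $Z_{\mathbf{\Xi}}(\mathbf{S})$ with $\mathbf{Str}(A)$ using $\mathbf{S}=\mathbf{Z}$ (rank $1$), reduce $\mathbf{\Xi}(L)/R$ to $(\mathbf{Str}(A)/\mathbf{Z})(L)/R$ via the base-change-compatible isomorphisms, and conclude from the already-established $R$-triviality of $\mathbf{Str}(A)/\mathbf{Z}$, splitting into the division and reduced cases exactly as in the $E_{8,2}^{78}$ theorem. This is correct and is the same proof as in the paper.
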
 
\begin{corollary}\label{KnTits-2} Let ${\bf \Xi}$ be a simple, simply connected algebraic group over $k$ with Tits index $E^{78}_{7,1}$. Then the Kneser-Tits conjecture holds for ${\bf \Xi}$. 
\end{corollary}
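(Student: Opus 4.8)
The plan is to deduce this statement formally from the $R$-triviality established in Theorem \ref{Koecher}, following verbatim the pattern already used for Corollary \ref{KnTits-1}. The only genuine input is Gille's theorem (Thm. 7.2, \cite{G}), and the preceding sections have arranged matters precisely so that it applies.

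First I would verify the hypotheses of (Thm. 7.2, \cite{G}) for ${\bf \Xi}$: it is semisimple, simply connected and absolutely almost simple, and it is isotropic over $k$ because its Tits index $E^{78}_{7,1}$ has relative rank $1$, so ${\bf \Xi}$ contains a nontrivial $k$-split torus ${\bf S}$. Gille's theorem then furnishes the isomorphism
$$W(k, {\bf \Xi}) = {\bf \Xi}(k)/{\bf \Xi}(k)^{\dagger} \cong {\bf \Xi}(k)/R,$$
identifying the Whitehead group of ${\bf \Xi}$ with its group of $R$-equivalence classes.

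Next I would invoke Theorem \ref{Koecher}: since ${\bf \Xi}$ is $R$-trivial, in particular ${\bf \Xi}(k)/R = \{1\}$. Combining this with the displayed isomorphism yields $W(k, {\bf \Xi}) = {\bf \Xi}(k)/{\bf \Xi}(k)^{\dagger} = \{1\}$, which is exactly the assertion of the Kneser-Tits conjecture for ${\bf \Xi}$.

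There is essentially no obstacle remaining at this stage. All the difficulty has been concentrated upstream, in proving $R$-triviality (Theorem \ref{Koecher}), which in turn rests on the Main theorem (Theorem \ref{main}) asserting the $R$-triviality of $\text{\bf Str}(A)$. The present corollary is therefore a purely formal consequence, and the only point demanding any care is that Gille's equivalence requires isotropy of ${\bf \Xi}$, which is guaranteed by the index $E^{78}_{7,1}$ having relative rank $1$.
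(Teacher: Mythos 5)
Your proposal is correct and matches the paper's argument exactly: the paper (implicitly, following the pattern of Corollary \ref{KnTits-1}) deduces the corollary by combining the $R$-triviality of ${\bf \Xi}$ from Theorem \ref{Koecher} with Gille's identification $W(k,{\bf \Xi})\cong{\bf \Xi}(k)/R$ from (Thm. 7.2, \cite{G}). Your additional care about the isotropy hypothesis (relative rank $1$ of the index $E^{78}_{7,1}$) is a sensible check, but introduces nothing beyond what the paper already takes for granted.
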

\vskip1mm
\noindent
    {\bf Acknowledgement.} This work was partially funded by the DFG under Germany's Excellence
Strategy ``EXC 2044-390685587, Mathematics M\"unster: Dynamics-Geometry-Structure'', when the author visited Math-Institute, M\"unster, in the summer of 2019. We thank Linus Kramer for his support. We thank Holger Petersson and Richard Weiss for their interest in our work. We thank colleagues B. Sury, Mathew Joseph and Siva Athreya for their support during the culmination of this work.

\vskip5mm
Indian Statistical Institute, Stat-Math.-Unit, 8th Mile Mysore Road, Bangalore-560059, India.
\center email: maneesh.thakur@gmail.com

\end{document}